\def\OO{{\mathcal O}}
\def\F{\mathcal{F}}
\def\G{\mathcal{G}}
\def\Pic0{{\rm Pic}^0}
\def\Aut0{{\rm Aut}^0}
\def\Bs{\mathrm{Bs}\,}
\def\PBs{\mathrm{PBs}\,}
\def\DD{{\mathbf{D}}}
\def\cX{\mathcal{X}}
\def\*{{\underline *}}
\def\Alb{{\rm Alb}\,}
\theoremstyle{plain}
\newtheorem{theorem}{Theorem}[section]
\newtheorem{proposition/example}[theorem]{Proposition/Example}
\newtheorem{proposition}[theorem]{Proposition}
\newtheorem{corollary}[theorem]{Corollary}
\newtheorem{lemma}[theorem]{Lemma}
\theoremstyle{definition}
\newtheorem{definition}[theorem]{Definition}
\newtheorem{remark}[theorem]{Remark}
\newtheorem{conjecture}[theorem]{Conjecture}
\newtheorem{conjecture/question}[theorem]{Conjecture/Question}
\newtheorem{remark/definition}[theorem]{Remark/Definition}
\newtheorem{notation/assumptions}[theorem]{Assumptions/Notation}
\numberwithin{equation}{section}
\theoremstyle{remark}
\begin{document}
\title[Paracanonical base locus, Albanese morphism,\,\ldots]{
Paracanonical base locus, Albanese morphism, and semi-orthogonal indecomposability of derived categories
}
 \author{Federico Caucci}

\address{Dipartimento di Matematica ``Federigo Enriques'', Universit\`a degli Studi di Milano,
Via Cesare Saldini 50, 20133 Milano -- Italy}
\curraddr{Dipartimento di Matematica e Informatica, Universit\`a di Ferrara, Via Machiavelli 30, 44121 Ferrara, Italy} 
 \email{federico.caucci@unife.it }
 \thanks{The author was supported by the ERC Consolidator Grant ERC-2017-CoG-771507-StabCondEn.
}

\maketitle

\begin{abstract} 
Motivated by an indecomposability criterion of Xun Lin for  the bounded derived category of coherent sheaves on a smooth projective variety $X$, we study the paracanonical base locus  of $X$, that is the intersection of the base loci of  $\omega_X \otimes P_{\alpha}$ for all $\alpha \in \Pic0 X$. 
We prove that this is equal to
 the relative base locus of $\omega_X$ with respect to  the Albanese morphism of $X$. 
As an application, we get that bounded
 derived categories of Hilbert schemes of points on certain surfaces
do not admit non-trivial semi-orthogonal decompositions. We also have a consequence on the indecomposability of bounded derived categories in families.
Finally, our viewpoint
 allows to  unify and extend   some results  
 recently appearing in the literature.  
\end{abstract}

\section{Introduction}

In this note, we are interested in finding hypothesis ensuring the \emph{indecomposability} of the bounded derived category of a variety $X$, that is the nonexistence of non-trivial semi-orthogonal decompositions of $\DD^b(X)$ (see \S 2.1 for  definitions).
In recent years,  bounded derived categories of projective varieties -- and their semi-orthogonal decompositions -- have been much studied.  
The interest in their indecomposability 
 mainly rests on its conjectural relation with the minimal model program (see \cite{kaok} and \S 2.1):
\begin{conjecture}\label{conj1}
Let $X$ be a smooth projective variety.
If $\DD^b(X)$ has no non-trivial semi-orthogonal decompositions, then $X$ is minimal, i.e., the canonical line bundle $\omega_X$ is nef.
\end{conjecture}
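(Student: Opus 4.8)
Since the assertion is a conjecture, what follows is a strategy rather than a complete argument. The plan is to establish the contrapositive: if $\omega_X$ is \emph{not} nef, then $\DD^b(X)$ admits a non-trivial semi-orthogonal decomposition. First I would invoke the cone theorem of Mori theory: non-nefness of $\omega_X$ produces a $K_X$-negative extremal ray, hence (after running one step of the minimal model program) an elementary extremal contraction $f \colon X \to Y$ contracting a rational curve $C$ with $\omega_X \cdot C < 0$. Such an $f$ is divisorial, of fibre type, or small (flipping), and in each case the goal is to exhibit a non-trivial admissible subcategory of $\DD^b(X)$.

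I would dispatch the two ``good'' cases first. For a divisorial contraction that is the blow-up of a smooth subvariety, Orlov's blow-up formula presents $\DD^b(X)$ as built from $\DD^b(Y)$ together with several copies of the derived category of the centre, a manifestly non-trivial decomposition. For a fibre-type contraction that is a projective bundle --- or, more generally, one admitting a relative exceptional object or a relative tilting generator --- Orlov's projective-bundle formula and its relative refinements again produce such a decomposition. In both situations the contracted curve $C$ with $\omega_X \cdot C < 0$ is exactly the input making the relevant pushforward functors fully faithful with non-zero complement.

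The hard part is twofold. A general extremal contraction need not be a smooth blow-up or a projective bundle: $Y$ is typically singular and the fibres of $f$ jump, so Orlov's formulas must give way to a structural description of the relative derived category of $X$ over $Y$ that tracks its semi-orthogonal pieces across singular fibres. The genuinely open difficulty, however, is the small (flipping) contraction, where no exceptional divisor is available. Here the expected mechanism is the Bondal--Orlov / Kawamata ``DK-hypothesis'': writing $p^{*} K_X \geq q^{*} K_{X^{+}}$ on a common resolution of the flip $X \dashrightarrow X^{+}$ (flips raise discrepancies), one expects a fully faithful embedding $\DD^b(X^{+}) \hookrightarrow \DD^b(X)$ with admissible image. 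Its orthogonal complement should be non-trivial precisely because the discrepancy inequality is strict (the flip is not a flop), and establishing this non-vanishing is the delicate homological point that remains conjectural in general.

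Granting the relative semi-orthogonal statement for singular divisorial and fibre-type contractions, together with the DK-hypothesis for flips, the cone theorem combined with induction on the length of an MMP would close the argument. I expect the flipping case to be decisive, as it is there that the birational and categorical pictures are least understood. By contrast, the base-locus techniques developed in this paper, through Lin's criterion, bear on the \emph{reverse} implication --- deducing indecomposability from emptiness of the paracanonical base locus --- and so illuminate, rather than directly settle, the direction asserted here.
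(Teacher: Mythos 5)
The statement you were asked about is Conjecture~\ref{conj1} of the paper: it is an \emph{open conjecture}, not a theorem, and the paper offers no proof of it --- the author explicitly says it is ``widely open in general,'' and the paper's actual results (Theorem~\ref{mainthm}, Corollary~\ref{maincor}, Theorem~\ref{hilbthm}) concern sufficient conditions for indecomposability, i.e.\ they bear on the converse direction, as you correctly observe in your final paragraph. So there is no paper proof to compare yours against; the only question is whether your proposal closes the conjecture, and it does not.

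Your strategy is the standard expected one, and it is consistent with the paper's own discussion of the $DK$-hypothesis in \S 2.1: non-nefness of $\omega_X$ gives a $K_X$-negative extremal contraction by the cone theorem, and one hopes each type of contraction yields a non-trivial SOD. But every step beyond the smooth blow-up and projective-bundle cases is a genuine gap, as you acknowledge. Concretely: (i) a divisorial contraction of a smooth variety can have singular centre and singular target, where Orlov's formula is unavailable; (ii) a Mori fibre space need not carry a relative exceptional object (conics bundles already require work, and general fibre-type contractions are open); (iii) for small contractions, even granting the unproven $DK$-hypothesis, a fully faithful embedding $\DD^b(X^{+}) \hookrightarrow \DD^b(X)$ does not by itself produce a \emph{non-trivial} SOD --- one must additionally rule out that the embedding is an equivalence, and no mechanism for this non-vanishing of the orthogonal complement is known. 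One small structural remark: you do not need induction on the length of an MMP; since $X$ itself is smooth, a single extremal contraction from $X$ would suffice to produce the required SOD, and running further steps would anyway leave the smooth category where $\DD^b$ is well-behaved. In short, your proposal is an accurate map of the difficulty, but the conjecture remains exactly as open after it as before.
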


 Examples of varieties whose derived categories admit no non-trivial semiorthogonal decompositions are,  for instance,
varieties with trivial, or, more generally,  algebraically trivial canonical bundle (\cite{br} and \cite[Corollary 1.7]{kaok}, respectively), 
curves of genus $\geq 1$ \cite{ok}, 
varieties whose Albanese morphism is finite \cite[Theorem 1.4]{pi}.\footnote{To be precise, in \cite[Theorem 1.4]{pi}, Pirozhkov proved that such varieties are  noncommutatively stably semiorthogonally indecomposable, which is a stronger notion than indecomposability. 
}

It is well known that the converse  direction  in Conjecture \ref{conj1} is false (a counterexample is furnished by Enriques surfaces \cite{zu}), but
we have the following folklore  (see \cite[Conjecture 1.6]{bigole}, or \cite[Question E]{baetal}):

\begin{conjecture}\label{conjsuff}
Let $X$ be a smooth projective variety. If  $\omega_X$ is nef and effective, then $\DD^b(X)$ has no non-trivial semi-orthogonal decompositions.
\end{conjecture}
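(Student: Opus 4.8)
The plan is to run the \emph{point-like object} mechanism of Kawatani--Okawa, in the sharpened form underlying Lin's criterion, and to feed into it the description of the paracanonical base locus obtained in this paper. Suppose, for contradiction, that there is a nontrivial semi-orthogonal decomposition $\DD^b(X) = \langle \cA, \sB\rangle$. Recall that the Serre functor of $X$ is $S_X = (-)\otimes \omega_X[\dim X]$, and that for a closed point $x\in X$ and any $\alpha \in \Pic0 X$ one has $k(x)\otimes P_\alpha \cong k(x)$. Hence a section $s\in H^0(X,\omega_X\otimes P_\alpha)$ with $s(x)\neq 0$ produces an isomorphism $k(x)\xrightarrow{\sim} k(x)\otimes\omega_X$, i.e.\ a Serre-compatible self-identification of the point-like object $k(x)$. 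The standard consequence is that such a $k(x)$ cannot be split by the decomposition: it lies entirely in $\cA$ or entirely in $\sB$. This non-splitting holds for every point $x$ outside the paracanonical base locus $\PBs(X)=\bigcap_{\alpha\in\Pic0 X}\Bs(\omega_X\otimes P_\alpha)$.

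First I would organize the non-splitting points by component. Writing $U = X\setminus\PBs(X)$, set $U_{\cA}=\{x\in U: k(x)\in\cA\}$ and $U_{\sB}=\{x\in U: k(x)\in\sB\}$; by the previous paragraph $U = U_{\cA}\sqcup U_{\sB}$, and a standard semicontinuity argument for the flat family of projection objects shows each piece is closed in $U$. Since $\omega_X$ is effective we have $\PBs(X)\subseteq\Bs(\omega_X)\neq X$, so $U$ is a nonempty open subset of the irreducible variety $X$; if $U$ is moreover connected, one of the two pieces is empty, say $U_{\sB}=\varnothing$. Semiorthogonality then forces every object of $\sB$ to have all cohomology sheaves supported on $\PBs(X)$, since a cohomology sheaf of an object of $\sB$ meeting $U$ would produce a nonzero morphism to some $k(x)$ with $x\in U_{\cA}$, violating the orthogonality.

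The two remaining tasks are geometric, and I would address them through the main result of this paper, which identifies $\PBs(X)$ with the relative base locus of $\omega_X$ along the Albanese morphism $a\colon X\to\Alb X$. On the one hand, this identification controls the \emph{size} of $\PBs(X)$: whenever $a$ is finite onto its image the fibers are finite, the relative base locus is empty, $U = X$ is connected, and $\sB$ is supported on $\varnothing$, forcing $\sB = 0$ --- recovering, in particular, Pirozhkov's theorem. On the other hand, in the presence of positive-dimensional Albanese fibers one must $(i)$ prove that $X\setminus\PBs(X)$ is connected, so that the dichotomy of the previous paragraph applies, and $(ii)$ deduce $\sB = 0$ from the fact that $\sB$ is supported on the relative base locus. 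This is precisely where nefness of $\omega_X$ is meant to enter: to prevent the relative base locus from separating $X$ and to supply the positivity that kills a subcategory supported on it.

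I expect step $(ii)$ --- upgrading ``$\sB$ is supported on $\PBs(X)$'' to ``$\sB = 0$'' --- to be the genuine obstacle, and it is what prevents the argument from closing in complete generality (hence the statement remains Conjecture~\ref{conjsuff}). When $\PBs(X)$ is empty this vanishing is automatic, and when the relative base locus is rigid (for instance a reduced point) one can argue by hand; but for a positive-dimensional relative base locus there is no formal reason that an admissible subcategory supported there must vanish, and translating the numerical input ``$\omega_X$ nef'' into such a vanishing is the crux. The realistic contribution I would aim for is therefore twofold: reduce the whole problem to the relative base locus over the Albanese morphism via the main theorem, and then settle the indecomposability in all cases where that locus is empty or is otherwise forced to carry only the trivial subcategory.
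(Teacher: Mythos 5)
The statement you are attempting is Conjecture~\ref{conjsuff}, and the paper does not prove it: the author states explicitly that it ``is widely open in general'' and only verifies it for special classes of varieties. So there is no proof in the paper to compare against, and your attempt --- as you yourself acknowledge --- is not a proof either. To your credit, you are honest about where the argument stops: the passage from ``$\sB$ is supported on $\PBs|\omega_X|$'' to ``$\sB = 0$'' is precisely the step nobody knows how to do when the paracanonical base locus is positive-dimensional, and the nefness hypothesis on $\omega_X$ has no known translation into the categorical mechanism you describe. What your sketch actually reconstructs is the known partial machinery: the Kawatani--Okawa point-splitting argument, Lin's refinement (Theorem~\ref{Linthm}), and this paper's Theorem~\ref{mainthm} identifying $\PBs|\omega_X|$ with the relative base locus along the Albanese morphism. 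Combining these yields exactly the paper's Corollary~\ref{maincor} (indecomposability when $\mathfrak{b}_X$ cuts out a finite set) and, as a special case, Pirozhkov's result for finite Albanese morphism --- but nothing beyond that.

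Two smaller corrections to the sketch itself. First, your step $(i)$ is vacuous: since $X$ is irreducible and $\omega_X$ is effective, $U = X \setminus \PBs|\omega_X|$ is a nonempty open subset of an irreducible variety, hence automatically connected; no positivity of $\omega_X$ is needed to ``prevent the base locus from separating $X$.'' This makes the situation starker, not better: it shows that nefness never actually enters your argument anywhere, which is a strong signal that this mechanism alone cannot prove the conjecture (indeed the same mechanism, with nefness playing no role, is what proves the finite-base-locus cases). Second, the dichotomy $U = U_{\cA} \sqcup U_{\sB}$ with both pieces closed requires the semicontinuity/deformation argument of Kawatani--Okawa for the projection functors applied to the family of skyscrapers; it is standard but not formal, and in the published arguments it is the finiteness of the base locus (not just its properness) that is then used to kill the residual subcategory. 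In summary: your proposal correctly assembles the state of the art and correctly locates the open gap, but it proves only what the paper already proves, and the conjecture remains open.
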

\noindent At the moment of writing, this conjecture (as even the above Conjecture \ref{conj1}) is widely open in general and there are just some (classes of) varieties for which it has been 
verified: see \cite{kaok, baetal}, besides the references already quoted above.
A particularly interesting case is given by  symmetric products of curves.
It has been conjectured by Belmans-Galkin-Mukhopadhyay \cite[Conjecture 1.1]{begamu} and, independently, by  Biswas-G\'omez-Lee \cite[Conjecture 1.4]{bigole}, that the bounded derived category of the $n$-\emph{th} symmetric product of a smooth projective curve of genus $g \geq 2$ has no non-trivial semi-orthogonal decompostions, if $n \leq g - 1$. 
Very recently,  Lin \cite[Theorem 1.9]{li}  nicely proved  it by using  a criterion that we will recall below (see Theorem \ref{Linthm}). This complements some previous partial results  in  \cite{bigole} and \cite{baetal}.

A result of Kawatani-Okawa   is particularly inspiring for us. These authors established a relation between the base locus $\Bs|\omega_X|$ of the canonical line bundle of $X$ and the non-existence of
semi-orthogonal decompositions (\emph{SODs} for short) of $\DD^b(X)$.
 Among other things,  they proved \cite[Corollary 1.5]{kaok}:
\begin{theorem}[Kawatani-Okawa]\label{KOthm}
If $\Bs|\omega_X|$ is a finite set (possibly empty), then $\DD^b(X)$ has no non-trivial SODs.
\end{theorem}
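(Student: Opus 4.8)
The plan is to argue by contradiction: suppose $\DD^b(X) = \langle\mathcal{A},\mathcal{B}\rangle$ is a non-trivial semi-orthogonal decomposition, and show that forces $\mathcal{B} = 0$. Throughout I would use the skyscraper sheaves $k(x)$, $x\in X$, which form a spanning class and are indecomposable \emph{point objects}: since $\omega_X$ is locally trivial, $S(k(x))\cong k(x)[\dim X]$ for the Serre functor $S = (-\otimes\omega_X)[\dim X]$. For a point $x$ I write the decomposition triangle $B_x\to k(x)\to A_x\to B_x[1]$ with $A_x\in\mathcal{A}$, $B_x\in\mathcal{B}$, so that $k(x)\in\mathcal{A}$ (resp. $\in\mathcal{B}$) precisely when $B_x = 0$ (resp. $A_x = 0$). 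Set $X_{\mathcal{A}} = \{x : k(x)\in\mathcal{A}\}$ and $X_{\mathcal{B}} = \{x : k(x)\in\mathcal{B}\}$; these are disjoint, since $k(x)\in\mathcal{A}\cap\mathcal{B}$ together with $\Hom(\mathcal{B},\mathcal{A}) = 0$ would give $\Hom(k(x),k(x)) = 0$.

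The technical heart of the argument, and the step where the hypothesis on $\Bs|\omega_X|$ enters, is the local claim: if $x\notin\Bs|\omega_X|$, then $k(x)\in\mathcal{A}$ or $k(x)\in\mathcal{B}$. To prove it I would fix a section $s\in H^0(X,\omega_X)$ with $s(x)\neq 0$, so that multiplication by $s$ is an \emph{isomorphism} $k(x)\xrightarrow{\sim}k(x)\otimes\omega_X$, and aim to show that the gluing map $w\colon A_x\to B_x[1]$ vanishes; then the triangle splits, $k(x)\cong A_x\oplus B_x$, and indecomposability of $k(x)$ forces one summand to be zero. The inputs are Serre duality together with the two ``mutated'' decompositions $\langle S\mathcal{B},\mathcal{A}\rangle$ and $\langle\mathcal{B},S^{-1}\mathcal{A}\rangle$, which are again semi-orthogonal: semi-orthogonality gives $\Hom^{\bullet}(B_x,A_x) = 0$, and, after applying $S$, the vanishing of $\Hom^{\bullet}(A_x,B_x\otimes\omega_X)$, while the nonvanishing of $s$ at $x$ is what should let one trade the twist by $\omega_X$ for the untwisted group controlling $w$. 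I expect this to be the main obstacle: the components $A_x$, $B_x$ are \emph{not} supported at the single point $x$ and $\mathcal{A}$, $\mathcal{B}$ are not stable under $-\otimes\omega_X$, so $s$ must be exploited carefully (through its vanishing divisor $Z(s)\not\ni x$) rather than as a global trivialisation.

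Granting the claim, $U := X\setminus\Bs|\omega_X| = (X_{\mathcal{A}}\cap U)\sqcup(X_{\mathcal{B}}\cap U)$. Next I would check that $X_{\mathcal{A}}$ and $X_{\mathcal{B}}$ are \emph{open}: the projection functors of the decomposition are of Fourier--Mukai type (by Orlov's representability theorem), so $x\mapsto A_x$ and $x\mapsto B_x$ are families governed by the kernels, and the loci where a fibre vanishes are the complements of the projections of the (closed) supports of those kernels, hence open. Since $X$ is connected and $\Bs|\omega_X|$ is finite, $U$ is connected, so one of the two open pieces covers it; say $U\subseteq X_{\mathcal{A}}$, i.e. $X_{\mathcal{B}}\subseteq\Bs|\omega_X|$.

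Finally I would deduce $\mathcal{B} = 0$. For any $B\in\mathcal{B}$ and any $x\in U$ one has $\Hom^{\bullet}(B,k(x)) = 0$ (as $k(x)\in\mathcal{A}$ and $\Hom(\mathcal{B},\mathcal{A}) = 0$), whence $\Supp B\subseteq\Bs|\omega_X|$ is finite. If $\mathcal{B}\neq 0$, choose $0\neq B\in\mathcal{B}$; since objects supported at distinct points are completely orthogonal, $B$ splits according to its finitely many support points, and because $\mathcal{B}$ is thick each summand lies in $\mathcal{B}$, so I may assume $B$ is supported at a single $x_0\in\Bs|\omega_X|$. Such a nonzero object thickly generates $k(x_0)$, giving $k(x_0)\in\mathcal{B}$ and thus $x_0\in X_{\mathcal{B}}$. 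But $X_{\mathcal{B}}$ is open and contained in the finite set $\Bs|\omega_X|$, hence empty, a contradiction. Therefore $\mathcal{B} = 0$, the decomposition is trivial, and $\DD^b(X)$ has no non-trivial SODs. (The case $\dim X = 0$ is immediate, and if $H^0(X,\omega_X) = 0$ the hypothesis already fails unless $X$ is a point.)
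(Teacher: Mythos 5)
First, a point of calibration: the paper does not prove this statement at all --- it is quoted as \cite[Corollary 1.5]{kaok}, so your attempt has to be measured against Kawatani--Okawa's own argument, whose architecture (skyscrapers as indecomposable objects, the dichotomy $k(x)\in\mathcal{A}$ or $k(x)\in\mathcal{B}$ off the base locus, then a support/connectedness argument) your outline correctly reproduces.

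The genuine gap is exactly the step you flag as ``the main obstacle'': the local claim that $x\notin\Bs|\omega_X|$ forces $k(x)\in\mathcal{A}$ or $k(x)\in\mathcal{B}$. This is not a technical detail to be filled in later --- it is the entire mathematical content of the theorem, and the route you propose (prove $w=0$ for a fixed point $x$ and a fixed section $s$ with $s(x)\neq 0$) cannot be made to work as stated. Here is why. The only orthogonality beyond $\Hom^{\bullet}(B_x,A_x)=0$ that your mutated decompositions give is $\Hom^{\bullet}(A_x,B_x\otimes\omega_X)=0$ (this is Serre duality applied to $\Hom^{\bullet}(B_x,A_x)=0$). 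Since multiplication by $s$ is a natural transformation $\mathrm{id}\Rightarrow(-\otimes\omega_X)$, composing $w\colon A_x\to B_x[1]$ with $s\colon B_x[1]\to B_x\otimes\omega_X[1]$ lands in that vanishing group, so $s\cdot w=0$. But the kernel of composition with $s$ is controlled by the cone of $s$ on $B_x$, an object supported on the divisor $Z(s)$; so all you may conclude is that $w$ factors through that cone, i.e.\ that the decomposition triangle splits over $X\setminus Z(s)$, giving $\Supp A_x\subseteq Z(s)$ or $\Supp B_x\subseteq Z(s)$ --- not $w=0$. Trying instead to force a contradiction through Serre-duality pairings runs into the fact that the top class of $\mathrm{Ext}^{\bullet}(k(x),k(x))$ (an exterior algebra) squares to zero, so composites of the relevant extremal classes vanish for degree reasons. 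No one-point, one-section argument closes this.

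What is missing is a global bootstrap on top of the generic splitting above: for $x\notin\Bs|\omega_X|$ and \emph{any} $y\notin\Bs|\omega_X|$ one can choose a section nonvanishing at both $x$ and $y$ (the sections vanishing at $x$ or at $y$ are two proper linear subspaces of $H^0(X,\omega_X)$), and since $x\in\Supp A_x\cup\Supp B_x$ the dichotomy then forces $\Supp B_x\subseteq\Bs|\omega_X|$ (or symmetrically for $A_x$). Only now, because objects supported on a finite set are isomorphic to their twists by any line bundle, does one get $\Hom(A_x,B_x[1])\cong\Hom(A_x,B_x\otimes\omega_X[1])=0$, hence the honest splitting and the dichotomy $k(x)\in\mathcal{A}$ or $k(x)\in\mathcal{B}$. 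With that claim in hand, your remaining steps are essentially sound, with two caveats: the Fourier--Mukai representability of the projection functors is Kuznetsov's theorem on base change for semi-orthogonal decompositions (together with admissibility of the components), not Orlov's fully-faithfulness criterion; and the assertion that a nonzero object supported at a single point thickly generates the skyscraper is true but is itself a theorem (Hopkins--Neeman--Thomason), not something to assert in passing.
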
  

\noindent So, in particular, a weak version of Conjecture \ref{conjsuff} is known to be true:
\emph{the global generation of $\omega_X$ implies the nonexistence of SODs of $\DD^b(X)$}.
In \cite{li},  Lin   focused instead on the \emph{paracanonical base locus}  of a variety $X$, that is the closed subset
\[
\PBs|\omega_X| := \bigcap_{\alpha \in \Pic0X} \Bs|\omega_X \otimes P_{\alpha}|,
\]
where $\omega_X$ is the canonical line bundle of $X$, and   $P_{\alpha}$ denotes the topologically trivial line bundle on $X$ corresponding to the (closed) point $\alpha$ of the Picard variety $\Pic0X$.
 Lin refined Kawatani-Okawa theorem as follows:
\begin{theorem}[Lin]\label{Linthm}
If $\PBs|\omega_X|$ is a finite set (possibly empty), then $\DD^b(X)$ has no non-trivial SODs.
\end{theorem}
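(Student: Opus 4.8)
The plan is to run the strategy underlying Kawatani--Okawa's Theorem \ref{KOthm}, but to feed into it the sections of the twisted canonical bundles $\omega_X\otimes P_{\alpha}$, $\alpha\in\Pic0 X$, in place of sections of $\omega_X$ alone. Assume for contradiction that $\DD^b(X)=\langle\mathcal{A},\mathcal{B}\rangle$ is a semi-orthogonal decomposition with $\mathcal{A},\mathcal{B}\neq0$. Since $X$ is smooth and projective, the projection functors $\pi_{\mathcal{A}},\pi_{\mathcal{B}}$ are of Fourier--Mukai type, with kernels $K_{\mathcal{A}},K_{\mathcal{B}}\in\DD^b(X\times X)$ fitting into a distinguished triangle $K_{\mathcal{B}}\to\mathcal O_{\Delta}\to K_{\mathcal{A}}\xrightarrow{+1}$; evaluating at a skyscraper $\mathcal O_x$ recovers the decomposition triangle of $\mathcal O_x$ by derived restriction to $\{x\}\times X$. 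Throughout, the Serre functor is $S_X=(-)\otimes\omega_X[\dim X]$, and the autoequivalences $\Phi_{\alpha}=(-)\otimes P_{\alpha}$ commute with $S_X$ and fix every skyscraper, $\Phi_{\alpha}\mathcal O_x\cong\mathcal O_x$.

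The heart of the matter is a localization statement generalizing the key lemma of Kawatani--Okawa: \emph{if $x\notin\PBs|\omega_X|$, then $\mathcal O_x$ lies entirely in $\mathcal{A}$ or entirely in $\mathcal{B}$}. To prove it I would use $x\notin\PBs|\omega_X|$ to choose $\alpha\in\Pic0 X$ and a section $s\in H^0(X,\omega_X\otimes P_{\alpha})$ with $s(x)\neq0$. Tensoring by $s$ yields a natural transformation $\mathrm{id}\Rightarrow(-)\otimes\omega_X\otimes P_{\alpha}=\Phi_{\alpha}\circ S_X[-\dim X]$ whose value at $\mathcal O_x$ is an isomorphism $\mathcal O_x\xrightarrow{\sim}\mathcal O_x\otimes\omega_X\otimes P_{\alpha}$. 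Because $\Phi_{\alpha}$ commutes with $S_X$ and preserves the skyscraper, this twisted section can be substituted for an ordinary section of $\omega_X$ in the Kawatani--Okawa rigidity argument: combined with Serre duality and the semi-orthogonality $\Hom(\mathcal{B},\mathcal{A})=0$, it forces the connecting map of the decomposition triangle of $\mathcal O_x$ to vanish, so the triangle splits; as $\mathcal O_x$ is indecomposable, one of its two components vanishes.

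Granting the localization, I would finish by a connectedness argument. Set $U:=X\setminus\PBs|\omega_X|$; since $\PBs|\omega_X|$ is finite and $X$ is irreducible, $U$ is connected and dense. For $x\in U$ the localization gives $\pi_{\mathcal{A}}(\mathcal O_x),\pi_{\mathcal{B}}(\mathcal O_x)\in\{\mathcal O_x,0\}$ with exactly one nonzero, so $U=U_{\mathcal{A}}\sqcup U_{\mathcal{B}}$, where $U_{\mathcal{A}},U_{\mathcal{B}}$ are the loci on which the fibrewise restriction of $K_{\mathcal{A}}$, resp.\ $K_{\mathcal{B}}$, to $\{x\}\times X$ is nonzero. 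By upper semicontinuity of the fibre dimensions these loci are closed in $U$, and being complementary they are open as well; by connectedness one is empty. Say $U_{\mathcal{B}}=\emptyset$, i.e.\ $\mathcal O_x\in\mathcal{A}$ for all $x\in U$. Then any $B\in\mathcal{B}={}^{\perp}\mathcal{A}$ satisfies $\Hom^{\bullet}(B,\mathcal O_x)=0$ for all $x\in U$, which (testing against the top nonzero cohomology sheaf of $B$) forces $\Supp B\subseteq\PBs|\omega_X|$ to be finite; a short separate argument, using the rotated decomposition $\langle S_X\mathcal{B},\mathcal{A}\rangle$ and the distribution of the skyscrapers over the finite set $\PBs|\omega_X|$ among the two components, then gives $B=0$. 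Hence $\mathcal{B}=0$, a contradiction.

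The main obstacle is the localization lemma: making a section of the \emph{twisted} bundle $\omega_X\otimes P_{\alpha}$ interact with the decomposition, even though the Serre functor only involves $\omega_X$. The commuting autoequivalence $\Phi_{\alpha}$ is exactly the device that resolves this, reducing the twisted statement to the untwisted Kawatani--Okawa rigidity; note that without the passage to all of $\Pic0 X$ one only controls $\Bs|\omega_X|$, which is why the paracanonical locus, rather than $\Bs|\omega_X|$, is the correct invariant. A secondary technical point is the last step, ruling out nonzero objects of $\mathcal{B}$ supported on the finite set $\PBs|\omega_X|$, which needs care precisely because skyscrapers over $\PBs|\omega_X|$ need not lie in a single component.
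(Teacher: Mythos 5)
First, a remark on the comparison itself: the paper does not prove this statement at all --- it is imported as a black box from Lin's paper \cite{li} (the paper's own contribution, Theorem~\ref{mainthm}, is then combined with it). So your proposal can only be measured against Lin's argument, which, like yours, is a twisted version of the Kawatani--Okawa method behind Theorem~\ref{KOthm}. Your global architecture --- a localization lemma placing $\OO_x$ entirely in $\mathcal{A}$ or in $\mathcal{B}$ for $x \notin \PBs|\omega_X|$, then a connectedness argument on the complement of the finite set, then an elimination of objects of $\mathcal{B}$ supported on the finite residual set --- is the right shape.

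However, there is a genuine gap at the heart of your localization lemma. You claim that a section $s \in H^0(X, \omega_X \otimes P_{\alpha})$ with $s(x) \neq 0$ can be substituted into the Kawatani--Okawa rigidity argument because $\Phi_{\alpha} = (-) \otimes P_{\alpha}$ commutes with $S_X$ and fixes skyscrapers. Run the substitution and see where it breaks: multiplication by $s$ sends the connecting map $w \in \Hom(\alpha_x, \beta_x[1])$ of the decomposition triangle $\beta_x \to \OO_x \to \alpha_x \xrightarrow{w} \beta_x[1]$ into $\Hom(\alpha_x, \beta_x \otimes \omega_X \otimes P_{\alpha}[1])$, and Serre duality identifies this target with $\Hom(\beta_x \otimes P_{\alpha}, \alpha_x[n-1])^{*}$. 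The semi-orthogonality $\Hom(\mathcal{B},\mathcal{A})=0$ kills this space only if $\beta_x \otimes P_{\alpha} \in \mathcal{B}$, i.e.\ only if one already knows that \emph{the components $\mathcal{A}$, $\mathcal{B}$ of any SOD are stable under tensoring by every $P_{\alpha}$, $\alpha \in \Pic0 X$}. This stability is precisely the nontrivial input that makes Lin's refinement of Theorem~\ref{KOthm} possible; it is a theorem of Kawatani--Okawa (SODs are preserved by the action of $\Aut0 (X) \ltimes \Pic0 X$, proved by a rigidity argument over the connected group $\Pic0 X$ --- the same result underlying their Corollary 1.7 on algebraically trivial canonical bundles, cited in the Introduction of this paper), and it does \emph{not} follow from the two properties you invoke: commuting with the Serre functor and fixing all skyscrapers are both satisfied by $\Phi_{\alpha}$, yet by themselves give no control on where $\Phi_{\alpha}$ sends $\mathcal{A}$ and $\mathcal{B}$. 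Once this invariance is added (with a proof or a precise citation to \cite{kaok}), your localization lemma goes through, and the remaining steps --- closedness of $U_{\mathcal{A}}, U_{\mathcal{B}}$ via the supports of the projection kernels, and the final elimination of objects supported on the finite set, which again uses that the Serre functor acts as a shift on point-supported objects --- can be completed along the lines of Kawatani--Okawa's proof of Theorem~\ref{KOthm}; your sketch of that last step is thin but the ideas are the correct ones.
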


 The study of paracanonical base loci  has its own interest. For instance, their emptiness (and related properties)  has been implicitly studied in, e.g., \cite{PP3, barja, eventual}, where several results of quite different flavor are obtained (see also \cite{paposurvey} and the references therein).  
Note that, since $\omega_X$ is an invertible sheaf, by definition $\PBs|\omega_X|$ equals the support of the cokernel of the sum of  evaluation maps
\[
\bigoplus_{\alpha \, \in \, \Pic0 X} H^0(X, \omega_X \otimes P_{\alpha}) \otimes P_{\alpha}^{\vee} \rightarrow \omega_X.
\]
Here we observe that, using some standard results of generic vanishing theory due to Chen, Jiang, Pareschi, Popa and Schnell (that will be recalled in \S 2.2 for reader's convenience), it is possible, quite easily, to give a different account of this locus, introducing a ``relative'' point of view (see  Theorem \ref{mainthm} below). This allows to generalize Theorem \ref{KOthm} to a relative setting where the Albanese morphism 
\[
a_X \colon X \rightarrow \Alb X
\]
 of $X$ appears. More precisely,    Lin's criterion  can be read  as
a relative version of  Kawatani and Okawa's one (see Corollary \ref{maincor} below).
 Before giving the statement, let us fix some other notations. 
We denote as usual by
\[
a_X^*{a_X}_*\omega_X \rightarrow \omega_X
\]
 the adjuction morphism.  The corresponding relative base ideal is 
\[
\mathfrak{b}_X := \mathfrak{b}(\omega_X, a_X) = \mathrm{Im}\bigl[ a_X^*{a_X}_*\omega_X \otimes \omega_X^{\vee} \rightarrow \OO_X \bigr],
\]
and the \emph{relative base locus of $\omega_X$ with respect to the Albanese morphism} $a_X$ is, by definition, the closed subset cut out by
 the relative base ideal $\mathfrak{b}_X$.

Then, our main  result is

\begin{theorem}\label{mainthm}
Let $X$ be a smooth projective variety, or a compact K\"ahler manifold.\footnote{See the comment below Theorem \ref{cjthm}.} Then the paracanonical base locus  $\PBs|\omega_X|$  equals the relative base locus of $\omega_X$ with respect to the Albanese morphism of $X$.
\end{theorem}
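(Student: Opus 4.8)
The plan is to compare the two closed subsets point by point and to isolate the single place where generic vanishing enters. Write $A:=\Alb X$, set $\G:={a_X}_*\omega_X$, and for $\alpha\in\Pic0 X$ let $P^A_\alpha$ be the degree-zero line bundle on $A$ with $a_X^*P^A_\alpha=P_\alpha$ (recall $a_X$ induces $\Pic0 A\xrightarrow{\sim}\Pic0 X$). By the projection formula ${a_X}_*(\omega_X\otimes P_\alpha)=\G\otimes P^A_\alpha$, hence $H^0(X,\omega_X\otimes P_\alpha)=H^0(A,\G\otimes P^A_\alpha)$; under this identification a section $t$ of $\G\otimes P^A_\alpha$ corresponds to the section $s$ of $\omega_X\otimes P_\alpha$ given by the composite $\OO_X\xrightarrow{a_X^*t}a_X^*\G\otimes P_\alpha\to\omega_X\otimes P_\alpha$, where the second arrow is the adjunction counit $a_X^*{a_X}_*\omega_X\to\omega_X$ twisted by $P_\alpha$. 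The key bookkeeping observation is that, taking fibers at a closed point $x\in X$ with $a:=a_X(x)$, the value $s(x)\in\omega_X\otimes k(x)$ is exactly the image of $t(a)\in\G\otimes k(a)$ under the fiber of the adjunction map $\psi_x\colon\G\otimes k(a)\to\omega_X\otimes k(x)$.

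The inclusion (relative base locus)$\,\subseteq\PBs|\omega_X|$ is then formal and uses no vanishing theory. Indeed, $x$ lies in the relative base locus precisely when $(\mathfrak{b}_X)_x\neq\OO_{X,x}$, i.e. when the adjunction $a_X^*\G\to\omega_X$ is not surjective at $x$, i.e. when $\psi_x=0$ (the target being a line). In that case $s(x)=\psi_x(t(a))=0$ for every $\alpha$ and every $t$, so every section of every $\omega_X\otimes P_\alpha$ vanishes at $x$ and $x\in\PBs|\omega_X|$.

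For the reverse inclusion I would argue by contraposition: assuming $x$ is not in the relative base locus, so that $\psi_x$ is surjective (equivalently nonzero), I would show $x\notin\PBs|\omega_X|$. The crux is the claim that the global evaluation map
\[
\mathrm{ev}_\G\colon\bigoplus_{\alpha\in\Pic0 X}H^0(A,\G\otimes P^A_\alpha)\otimes(P^A_\alpha)^{\vee}\longrightarrow\G
\]
is surjective. Granting this, $\mathrm{ev}_\G$ is surjective on the fiber at $a$, so composing with the surjection $\psi_x$ makes the induced map onto $\omega_X\otimes k(x)$ surjective; in particular some $\alpha$ and some $t\in H^0(A,\G\otimes P^A_\alpha)$ give $\psi_x(t(a))\neq 0$, i.e. the corresponding $s\in H^0(X,\omega_X\otimes P_\alpha)$ has $s(x)\neq 0$, whence $x\notin\PBs|\omega_X|$.

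Thus the theorem reduces entirely to the surjectivity of $\mathrm{ev}_\G$, and this is the only step that uses the generic vanishing results recalled in \S 2.2 — so it is the main obstacle. Since $\G={a_X}_*\omega_X$ need not be M-regular, plain continuous global generation is unavailable; instead I would invoke the Chen--Jiang decomposition $\G\cong\bigoplus_i p_i^*\F_i\otimes P^A_{\beta_i}$, with $p_i\colon A\to A_i$ surjective homomorphisms of abelian varieties (complex tori in the K\"ahler case), $\beta_i$ torsion, and $\F_i$ M-regular on $A_i$. M-regular sheaves are continuously globally generated, so each $\mathrm{ev}_{\F_i}$ is surjective; pulling back along $p_i$ preserves surjectivity (for $\alpha$ pulled back from $A_i$, using $p_{i*}\OO_A=\OO_{A_i}$ to recover all of $H^0(A_i,\F_i\otimes P^{A_i}_\gamma)$), twisting by the fixed $P^A_{\beta_i}$ only reindexes the characters, and a direct sum of surjective evaluation maps is surjective. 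This gives surjectivity of $\mathrm{ev}_\G$ and completes the proof, the K\"ahler case being identical once the results of \S 2.2 are read in the analytic category.
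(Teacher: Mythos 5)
Your proposal is correct and follows essentially the same route as the paper: the easy inclusion is handled formally via the (fiberwise zero) adjunction map, and the hard inclusion is reduced to the surjectivity of the continuous evaluation map for ${a_X}_*\omega_X$, proved exactly as in the paper's Lemma \ref{lemma1} by combining the Chen--Jiang decomposition with continuous global generation of $M$-regular sheaves, pulling back along the quotients $\pi_i$, twisting by the torsion points, and reindexing. The only difference is cosmetic: you phrase the final comparison fiberwise at a point $x$ via $\psi_x$, whereas the paper applies $a_X^*$ to the surjection and post-composes with the counit at the level of sheaves.
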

\noindent In particular, we get that
 \emph{$\omega_X$ is $a_X$-relatively globally generated} (this means that the natural morphism $a_X^*{a_X}_*\omega_X \rightarrow \omega_X$ is surjective, i.e., $\mathfrak{b}_X = \OO_X$) \emph{if and only if  $\PBs|\omega_X|$  is empty}.

Since
$a_X$ finite implies  $a_X^* {a_X}_*\omega_X \twoheadrightarrow \omega_X$, we have that
\begin{corollary}\label{corfin}
The paracanonical base locus of 
a variety  with finite Albanese morphism is empty. 
\end{corollary}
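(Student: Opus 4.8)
The plan is to derive this immediately from Theorem \ref{mainthm}. That theorem identifies $\PBs|\omega_X|$ with the relative base locus of $\omega_X$ with respect to $a_X$, and, as noted just after its statement, this relative base locus is empty exactly when the adjunction morphism
\[
a_X^*{a_X}_*\omega_X \longrightarrow \omega_X
\]
is surjective, i.e.\ when $\mathfrak{b}_X = \OO_X$. Thus the whole statement reduces to verifying this surjectivity under the assumption that $a_X$ is finite.

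To do so I would use that a finite morphism is affine, together with the general fact that for any affine morphism $f \colon X \to Y$ and any quasi-coherent sheaf $\F$ on $X$ the counit of the adjunction between $f^*$ and $f_*$,
\[
f^*f_*\F \longrightarrow \F,
\]
is surjective. This is checked affine-locally: over an affine open $\mathrm{Spec}\, A \subseteq Y$, affineness gives $f^{-1}(\mathrm{Spec}\, A) = \mathrm{Spec}\, B$ for the corresponding ring map $A \to B$, and if $M$ is the $B$-module attached to $\F$ then the counit becomes the multiplication map $B \otimes_A M \to M$, $b \otimes m \mapsto bm$, which is surjective because $1 \otimes m \mapsto m$. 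Taking $f = a_X$ and $\F = \omega_X$ gives the surjectivity of $a_X^*{a_X}_*\omega_X \to \omega_X$, hence $\mathfrak{b}_X = \OO_X$, and so $\PBs|\omega_X| = \emptyset$ by Theorem \ref{mainthm}.

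I expect no real obstacle beyond Theorem \ref{mainthm} itself: once the paracanonical base locus has been given its relative interpretation, the corollary becomes the elementary observation that $f^*f_*\F$ surjects onto $\F$ along an affine morphism. It is perhaps worth recording that, since $X$ is proper, the Albanese morphism is affine precisely when it is finite, so phrasing the hypothesis as finiteness loses nothing; only affineness is actually used in the argument.
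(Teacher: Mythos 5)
Your proposal is correct and follows the same route as the paper: the paper likewise deduces the corollary from Theorem \ref{mainthm} together with the fact that $a_X$ finite implies $a_X^*{a_X}_*\omega_X \twoheadrightarrow \omega_X$, a fact it simply asserts. Your affine-local verification of that surjectivity (and the remark that only affineness of $a_X$ is used) just fills in a detail the paper treats as standard, so there is nothing substantive to add.
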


\begin{remark}
Similarly, if $X$ has maximal Albanese dimension (i.e., the Albanese morphism $a_X$ is generically finite onto its image),  the paracanonical base locus of $X$ is contained in the exceptional locus of $a_X$, which is defined as the inverse image, via $a_X$, of the points in $a_X(X)$ having non-finite fibers. 
This gives a partial answer to \cite[Question 8.7]{mendes}.
\end{remark}

Moreover, from Theorems \ref{mainthm} and \ref{Linthm}, it follows
\begin{corollary}\label{maincor}
If $\mathfrak{b}_X$ defines a finite set (possibly empty), then $\DD^b(X)$ has no non-trivial SODs. 
\end{corollary}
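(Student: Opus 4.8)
The plan is to combine Theorem~\ref{mainthm} with Theorem~\ref{Linthm}, which have already been established in the excerpt, with the only real content being the identification of the ``finiteness'' hypotheses across the two statements. By Theorem~\ref{mainthm}, the paracanonical base locus $\PBs|\omega_X|$ coincides with the relative base locus of $\omega_X$ with respect to the Albanese morphism $a_X$, which by definition is the closed subscheme cut out by the relative base ideal $\mathfrak{b}_X$. So the hypothesis that $\mathfrak{b}_X$ defines a finite set is, as a statement about the underlying topological space, literally the hypothesis that $\PBs|\omega_X|$ is finite.

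The key steps, in order, would be: first, recall that the closed subset defined by $\mathfrak{b}_X$ is exactly the relative base locus of $\omega_X$ relative to $a_X$, by the definition of $\mathfrak{b}_X$ given above. Second, invoke Theorem~\ref{mainthm} to identify this closed set with $\PBs|\omega_X|$ as sets. Third, feed the resulting finiteness of $\PBs|\omega_X|$ directly into Theorem~\ref{Linthm} (Lin's criterion) to conclude that $\DD^b(X)$ admits no non-trivial SODs. The argument is thus a short chain of implications, each link of which is a result available in the excerpt.

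The one point that deserves a word of care is the distinction between the \emph{scheme} cut out by $\mathfrak{b}_X$ and its \emph{set-theoretic} support: Theorem~\ref{mainthm} is an equality of closed subsets, and Lin's criterion concerns a finite set, so one should make explicit that ``$\mathfrak{b}_X$ defines a finite set'' is to be read at the level of the support, which is precisely the reduced closed subset appearing in Theorem~\ref{mainthm}. Since finiteness is a property of the underlying point set, this is a non-issue, but I would state it plainly to avoid any ambiguity.

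The hard part, if there is one, is purely bookkeeping rather than mathematical: there is no genuine obstacle here, because the substantive work has already been done in proving Theorem~\ref{mainthm} (the generic-vanishing identification of the paracanonical and relative base loci) and in Lin's Theorem~\ref{Linthm}. The corollary is a formal consequence, and the proof should be no more than two or three sentences stitching these together.
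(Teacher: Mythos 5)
Your proposal is correct and is exactly the paper's argument: the corollary is stated there as an immediate consequence of Theorem~\ref{mainthm} combined with Lin's criterion (Theorem~\ref{Linthm}), just as you describe. Your remark that the finiteness hypothesis only concerns the set-theoretic support of the subscheme cut out by $\mathfrak{b}_X$ is a reasonable clarification, and does not change the substance.
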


\noindent This partially generalizes the above-mentioned  result of Pirozhkov on varieties having finite Albanese morphism. 
 It also allows to give an alternative proof of
\cite[Theorem 1.9]{li} on indecomposability of derived categories of symmetric products of curves (see \S 4.2), and to put both these results under the same perspective.

On the other hand, Okawa recently informed us that  
he can prove  the indecomposability of the derived category of a smooth projective surface $S$ with nef $\omega_S$ and irregularity $h^0(S, \Omega_S^1) = \dim \Alb S > 0$.
It would be interesting to know if the same holds true in higher dimensions, for instance in the case of minimal  varieties of maximal Albanese dimension.

Our second result concerns indecomposability of derived categories of punctual
Hilbert scheme on surfaces.
Let $S$ be a smooth projective surface and $S^{[n]}$  the Hilbert scheme of points of length $n$ on $S$. In \cite[Proposition 5.1]{baetal}, the authors proved that, if $\Bs|\omega_S|$ is empty, then, for all $n \geq 1$, $\Bs|\omega_{S^{[n]}}|$ is empty too. Hence, by Theorem \ref{KOthm}, $\DD^b(S^{[n]})$ has no non-trivial SODs. 
It is natural to ask if something similar holds true  taking   into account  the paracanonical base locus  instead of the usual canonical base locus.
Theorem \ref{mainthm} turns out to be useful in this case.
Indeed, by using it, we prove 
\begin{theorem}\label{hilbthm}
Let $n > 0$ be a natural number.
 If $$\bigcap_{\alpha \in U} \Bs|\omega_S \otimes P_{\alpha}| = \varnothing $$
for all non-empty open subset $U \subseteq \Pic0 S$,\footnote{This means that $\omega_S$ is \emph{continuously globally generated} with respect to the Albanese morphism $a_S$ (see \S \ref{2.2} below). This notion, in general, neither implies nor it is implied by the global generation of $\omega_S$.}   
then $\PBs|\omega_{S^{[n]}}| = \varnothing$. 
Therefore,
$\DD^b(S^{[n]})$ has no non-trivial SODs.
\end{theorem}

\noindent 
This should be seen as a counterpart to the symmetric product of curves situation mentioned above (see also \S 4.2 and Remark \ref{ultimormk}).
It  adds a new class of examples 
to the list of varieties appearing in the literature, whose derived categories are indecomposable. For instance,
 it applies to surfaces having finite Albanese morphism and Albanese image of general type thanks to \cite[Proposition 5.5(iii)]{PP3} (see also \cite[Lemma 2.1]{jlt}).\footnote{Recall that such properties are inherited by taking finite ramified coverings.}

The proof of Theorem \ref{hilbthm} will be given in $\S 4.3$. The last statement follows from Lin's criterion (Theorem \ref{Linthm}).

Lastly, we consider indecomposability in families and we give  a variant of \cite[Corollary B]{baetal}, where the authors proved that, if $f \colon \mathcal{X} \rightarrow T$ is a smooth projective family of varieties, with $T$ an irreducible algebraic variety over $\mathbb{C}$, and  there exists a point $t_0 \in T$ such that $\Bs|\omega_{\cX_{t_0}}|$ is finite (possibly empty), then $\DD^b(\cX_t)$ has no non-trivial SODs for all $t \in T$. Here, as usual,  $\cX_t$ denotes the variety $f^{-1}(t)$.
By assuming that  the Albanese morphisms  of the  varieties appearing in the family $f$ vary ``smoothly'',
 we can prove that the same result is true, if the paracanonical base locus $\PBs|\omega_{\cX_{t_0}}|$ is finite (possibly empty) for a point $t_0 \in T$.  
 Namely,
in order to precisely  state our result, let us recall that, given a  smooth projective family $f \colon \cX \rightarrow T$, there exists
 a commutative diagram
\begin{equation}\label{notation0}
\xymatrix{
\cX \ar[r]^-{\varphi} \ar[d]_f &\Alb \cX / T  \ar[ld]^{\eta}\\
T
}
\end{equation}
where $\eta$ is a smooth morphism  such that, for all $t \in T$, the fiber $(\Alb \cX / T)_t$ is $\Alb \mathcal{X}_t$, and
 the restriction $\varphi_t \colon \cX_t \rightarrow (\Alb \cX / T)_t$ is the Albanese morphism $a_{\cX_t} \colon \cX_t \rightarrow \Alb \cX_t$ of $\cX_t$. The morphism $\varphi$  is a universal morphism from the family $f$ to families of abelian varieties over $T^{\prime}$, for any morphism $T^{\prime} \rightarrow T$ (see \cite[n.\ 236]{gr2}, or \cite{fu, campana} and the references therein):  
 $\varphi$ is called the \emph{relative Albanese morphism} for $f$ and $\Alb \mathcal{X} / T$ is the \emph{relative Albanese variety} for $f$.
Thanks to Theorem \ref{mainthm}, we have the following result whose proof is the content of \S 5:
\begin{theorem}\label{family}
Let $f \colon \mathcal{X} \rightarrow T$ be a smooth projective family of varieties as above, and assume that its relative Albanese morphism $\varphi$ is  smooth. If  $\PBs|\omega_{\cX_{t_0}}|$ is finite (possibly empty) for a certain point $t_0 \in T$, then $\DD^b(\cX_t)$ has no non-trivial SODs for all $t \in T$.
\end{theorem}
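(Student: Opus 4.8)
The plan is to promote Theorem~\ref{mainthm} to the relative setting of $f$ and then combine upper semicontinuity of fibre dimensions with the deformation-openness of semi-orthogonal decompositions already used in \cite[Corollary B]{baetal}. Write $A:=\Alb\cX/T$ and let $\omega_{\cX/T}$ be the relative dualizing sheaf of $f$, a line bundle since $f$ is smooth. First I would form the relative base ideal of the whole family,
\[
\mathfrak{b}:=\mathrm{Im}\bigl[\varphi^*\varphi_*\omega_{\cX/T}\otimes\omega_{\cX/T}^{\vee}\rightarrow\OO_{\cX}\bigr],
\]
and let $Z\subseteq\cX$ be the closed subscheme it defines. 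Since $f$ is projective and $Z$ is closed, $f|_Z\colon Z\to T$ is proper, so the function $t\mapsto\dim(Z\cap\cX_t)$ is upper semicontinuous on $T$.

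The crux is to identify each fibre $Z\cap\cX_t$ with $\PBs|\omega_{\cX_t}|$, and this is exactly where the smoothness of $\varphi$ is used. As $\varphi$ and $\eta$ are smooth, the relative cotangent sequence gives $\omega_{\cX/T}\cong\omega_{\cX/A}\otimes\varphi^*\omega_{A/T}$, hence by the projection formula
\[
\varphi_*\omega_{\cX/T}\cong(\varphi_*\omega_{\cX/A})\otimes\omega_{A/T}.
\]
Now $\varphi$ is smooth and proper, so its fibres form a smooth proper family over the irreducible, hence connected, variety $A$; by deformation-invariance of Hodge numbers the function $y\mapsto h^0(\varphi^{-1}(y),\omega_{\cX/A}|_{\varphi^{-1}(y)})=h^m(\varphi^{-1}(y),\OO)$, with $m$ the relative dimension of $\varphi$, is constant. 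By Grauert's theorem $\varphi_*\omega_{\cX/A}$ is therefore locally free and its formation commutes with base change, and tensoring by the line bundle $\omega_{A/T}$ preserves this. Restricting along the fibre inclusion $A_t\hookrightarrow A$, and using that $A_t=\Alb\cX_t$ has trivial canonical bundle, yields
\[
(\varphi_*\omega_{\cX/T})|_{A_t}\cong(\varphi_t)_*\omega_{\cX_t}.
\]
Consequently the morphism defining $\mathfrak{b}$ restricts on $\cX_t$ to the one defining the relative base ideal $\mathfrak{b}_{\cX_t}$ of $\omega_{\cX_t}$ with respect to $\varphi_t=a_{\cX_t}$, and by Theorem~\ref{mainthm} the latter cuts out $\PBs|\omega_{\cX_t}|$. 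Thus $Z\cap\cX_t=\PBs|\omega_{\cX_t}|$ as sets, for every $t\in T$.

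With this identification the argument closes quickly. At $t_0$ we have $\dim(Z\cap\cX_{t_0})=\dim\PBs|\omega_{\cX_{t_0}}|\le 0$, so by upper semicontinuity the locus $U:=\{t\in T:\dim\PBs|\omega_{\cX_t}|\le 0\}$ is open and contains $t_0$; since $T$ is irreducible, $U$ is dense. By Lin's criterion (Theorem~\ref{Linthm}), $\DD^b(\cX_t)$ has no non-trivial SODs for every $t\in U$. To propagate this to all of $T$ I would invoke the openness of the decomposable locus established in the proof of \cite[Corollary B]{baetal}: the set of $t$ for which $\DD^b(\cX_t)$ admits a non-trivial SOD is open. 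This set is disjoint from the dense open $U$, and an open set disjoint from a dense one is empty; hence $\DD^b(\cX_t)$ is indecomposable for all $t\in T$.

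The genuinely delicate point, and the step I expect to be the main obstacle, is the base-change identification $(\varphi_*\omega_{\cX/T})|_{A_t}\cong(\varphi_t)_*\omega_{\cX_t}$. A priori cohomology and base change for $\varphi_*\omega_{\cX/T}$ may fail where the relevant $h^0$ jumps, and fibres of Albanese maps are exactly the sort of varieties whose cohomology one expects to jump; the smoothness hypothesis on $\varphi$ is precisely what rescues the computation, by reducing it to $\varphi_*\omega_{\cX/A}$ for the smooth proper $\varphi$, where constancy of Hodge numbers forces the needed local freeness. I would therefore take care to state the factorization $\omega_{\cX/T}\cong\omega_{\cX/A}\otimes\varphi^*\omega_{A/T}$ and the application of Grauert precisely, since the whole semicontinuity argument rests on $Z$ restricting fibrewise to the paracanonical base loci.
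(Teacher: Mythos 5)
Your proof is correct and follows essentially the same route as the paper's: Proposition \ref{upper} there likewise forms the relative base ideal of $\omega_f$ on the total space, identifies its zero scheme fibrewise with $\PBs|\omega_{\cX_t}|$ via Theorem \ref{mainthm} (this being exactly where smoothness of $\varphi$ is used), and then concludes by upper semicontinuity of fibre dimension together with Lin's criterion (Theorem \ref{Linthm}) and the deformation machinery of \cite{baetal}. The only divergence is the implementation of the base-change identification $\iota^*\varphi_*\omega_f \cong {\varphi_t}_*\omega_{\cX_t}$: the paper obtains it from derived base change plus local freeness of all higher direct images $\mathrm{R}^q\varphi_*\omega_{\varphi}$ (which degenerates the relevant spectral sequence), whereas you use the factorization $\omega_{\cX/T}\cong\omega_{\cX/A}\otimes\varphi^*\omega_{A/T}$, constancy of $h^{0,m}$ on the fibres of $\varphi$, Grauert's theorem, and triviality of $\omega_{A_t}$ --- an equally valid and slightly more elementary variant resting on the same Hodge-theoretic input.
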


We believe (and hope) that Theorem \ref{mainthm} could also be useful in some other context, possibly far from those considered in this note.

\vskip0.3truecm\noindent\textbf{Notations.} 
All varieties  are assumed to be smooth, projective, irreducible and defined over $\mathbb C$.
A coherent sheaf on a variety is simply called a \emph{sheaf}.
Let $\mathcal{P}$ be the 
 normalized Poincar\'e line bundle on $\Alb X \times \Pic0 X$. Given a closed point $\alpha \in \Pic0 X$, we denote by $P_{\alpha} := \mathcal{P}|_{\Alb X \times \{\alpha\}}$ the corresponding line bundle on $\Alb X$. Recall that $\Pic0(\Alb X) \cong \Pic0 X$ via the pullback along the  Albanese morphism $a_X \colon X \rightarrow \Alb X$ of $X$. So, by a slight abuse of notation,   the line bundle $a_X^*P_{\alpha}$ on $X$ corresponding to $\alpha \in \Pic0 X$ is simply denoted by
 $P_{\alpha}$.

\vskip0.3truecm\noindent\textbf{Acknowledgements.}   
I would like to thank  Beppe Pareschi  and Paolo Stellari. I'm  especially grateful  to Xun Lin for  very valuable comments  
and useful communications,
and to  \'Angel D.\ R.\ Ortiz for our always fruitful conversations.
Finally, I  thank the anonymous referee, whose meticulous reading and valuable criticism have been of great help to me.

\section{Prerequisites 
}

Here we recall the definition of semi-orthogonal decomposition of a derived category, and some notions and results concerning the generic vanishing theory.

\subsection{Semi-orthogonal decompositions}

Let $\DD^b (X)$ be the bounded derived category of a smooth projective variety $X$. 
 If there exist full triangulated subcategories $\mathcal{A}$ and $\mathcal{B}$ of $\DD^b(X)$ such that 
\begin{equation}\label{sod}
\DD^b(X) = \left\langle \, \mathcal{A}, \, \mathcal{B} \, \right\rangle, 
\end{equation}
i.e., $\mathcal{A}$ and $\mathcal{B}$ generate the triangulated category $\DD^b(X)$, and
\[
\mathrm{Hom}(b, a) = 0
\]
for any object $a \in \mathcal{A}$ and $b \in \mathcal{B}$,
then we  say that \eqref{sod} is a  \emph{semi-orthogonal decomposition} (\emph{SOD} for short) of  $\DD^b(X)$.
\begin{definition}
A semi-orthogonal decomposition  of $\DD^b(X)$ is said to be \emph{non-trivial} if $\mathcal{A}$ and $\mathcal{B}$ are both non-zero. 
If $\DD^b(X)$ has no non-trivial semi-orthogonal decompositions, it is said to be \emph{indecomposable}.
\end{definition}
We refer the reader to \cite{kusurvey} for an overview on SODs in algebraic geometry.
The indecomposability of the derived category of a variety is (conjecturally) strongly related to its birational geometry. 
Indeed, the \emph{$DK$-hypothesis} of \cite{ka} (see also \cite{kasurvey}) predicts that, if there exists a $K$-inequality between two smooth projective varieties $X$ and $Y$, say $X \leq_K Y$,\footnote{This means that there exists a third
 smooth projective variety $Z$ with birational
morphisms $f \colon Z \rightarrow X$ and $g \colon Z \rightarrow Y$ such that $g^*\omega_Y \otimes f^* \omega_X^{\vee}$ is effective.} then there is a fully faithful functor of triangulated categories $\DD^b(X) \hookrightarrow \DD^b(Y)$.
This would imply the existence of a semi-orthogonal decomposition of $\DD^b(Y)$ 
\[
\DD^b(Y) = \left\langle  \, \DD^b(X)^{\bot},\, \DD^b(X) \, \right\rangle,
\]
where $\DD^b(X)^{\bot}$ is the right orthogonal complement of $\DD^b(X)$ inside $\DD^b(Y)$. See \cite{kasurvey}, and the references therein, for more information about this (and related) ideas.

\subsection{Generic vanishing}\label{2.2}

Let $A$ be an abelian variety, and $\F$ be a sheaf on $A$. We denote by
\[
V^i(A, \F) := \{ \alpha \in \Pic0 A \ |\ h^i(A, \F \otimes P_{\alpha}) > 0 \}
\] 
the $i$-\emph{th cohomological support loci} of $\F$. These are closed algebraic subset of $\Pic0 A$ by semicontinuity.

\begin{definition}
The sheaf $\F$ is said to be: 

\noindent \emph{i)} $GV$, if $\mathrm{codim}_{\Pic0 A} V^i(A, \F) \geq i$, for all $i \geq 0$;

\noindent \emph{ii)} $M$-\emph{regular},  if $\mathrm{codim}_{\Pic0 A} V^i(A, \F) > i$, for all $i > 0$.
\end{definition}

\noindent We refer the reader to the surveys \cite{msri} and \cite{paposurvey} (and the references therein) for the main results surrounding
 such notions.

For us, the central property is the following: 
\begin{proposition}[\cite{PP1}, Proposition 2.13]
 Let $\F$ be an $M$-regular sheaf on $A$. Then, for any non-empty open subset $U \subseteq \Pic0 A$, the sum of evaluation maps
\begin{equation}\label{cgg}
\bigoplus_{\alpha \, \in \, U} H^0(A, \F \otimes P_{\alpha}) \otimes P_{\alpha}^{\vee} \rightarrow \F
\end{equation}
is surjective.
\end{proposition}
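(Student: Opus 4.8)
The plan is to check surjectivity of \eqref{cgg} one fibre at a time, dualise, and then feed the resulting obstruction into the Fourier--Mukai transform, where $M$-regularity turns into a codimension bound that kills all higher terms. Write $g=\dim A$, $\hat A:=\Pic0 A$, let $\mathcal P$ be the Poincar\'e bundle on $A\times\hat A$, and let $R\hat S(-):=Rp_{\hat A,*}(p_A^*(-)\otimes\mathcal P)$ be the Fourier--Mukai functor, which is an equivalence by Mukai. Since the image of \eqref{cgg} is a coherent subsheaf $\mathcal G\subseteq\F$, surjectivity is equivalent, by Nakayama, to surjectivity on every fibre $\F\otimes k(x)$. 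Fix $x\in A$ and suppose \eqref{cgg} is \emph{not} surjective at $x$. Using $\Hom_A(\F,k(x))\cong(\F\otimes k(x))^{\vee}$, failure produces a nonzero $\phi\colon\F\to k(x)$ such that, for every $\alpha\in U$, the induced map on global sections $H^0(\F\otimes P_\alpha)\to H^0(k(x)\otimes P_\alpha)=\CC$ vanishes; these maps are exactly the pairings of the sections appearing in \eqref{cgg} against the functional $\phi$.

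Next I would transform $\phi$. Because $k(x)$ is a skyscraper, $R\hat S(k(x))=\mathcal P|_{\{x\}\times\hat A}=:L_x$ is a single line bundle on $\hat A$ in degree $0$, and applying $R\hat S$ to $\phi$ and taking $\mathcal H^0$ gives a map of sheaves $\widehat\phi\colon R^0\hat S(\F)\to L_x$. The base-change maps $R^0\hat S(\F)\otimes k(\alpha)\to H^0(\F\otimes P_\alpha)$ fit into a commutative square identifying the fibre $\widehat\phi\otimes k(\alpha)$ with the section map $H^0(\F\otimes P_\alpha)\to\CC$ of the previous paragraph; hence $\widehat\phi\otimes k(\alpha)=0$ for all $\alpha\in U$. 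The image of $\widehat\phi$ is then a subsheaf of the line bundle $L_x$ supported on the proper closed set $\hat A\setminus U$, so it vanishes, i.e.\ $\widehat\phi=0$. It then remains to show this is impossible, namely that $\phi\mapsto\widehat\phi$ is injective.

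The crux is this injectivity, and it is precisely where $M$-regularity (the \emph{strict} inequalities) is used. By Mukai's equivalence, $\phi$ is recovered from $R\hat S(\phi)\in\Hom_{\DD^b(\hat A)}(R\hat S\F,\,L_x)$, so it suffices that this derived $\Hom$ be carried isomorphically onto $\Hom_{\hat A}(R^0\hat S\F,L_x)$ by passage to $\mathcal H^0$; using the truncation triangle $\tau_{\le0}R\hat S\F\to R\hat S\F\to\tau_{\ge1}R\hat S\F$, the obstruction is $\Hom_{\DD^b(\hat A)}(\tau_{\ge1}R\hat S\F,L_x)$, whose only potentially nonzero pieces are the terms $\Ext^i_{\hat A}(R^i\hat S\F,L_x)$ for $i\ge1$. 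Now by semicontinuity and base change $\Supp R^i\hat S(\F)\subseteq V^i(A,\F)$, so $M$-regularity gives $\codim\Supp R^i\hat S(\F)\ge\codim V^i(A,\F)>i$ for $i\ge1$; since $L_x$ is locally free on the smooth variety $\hat A$, a sheaf supported in codimension $>i$ has $\Ext^j(-,L_x)=0$ for all $j\le i$, and in particular $\Ext^i(R^i\hat S\F,L_x)=0$. Thus every higher contribution dies, $\phi\mapsto\widehat\phi$ is injective, and $\widehat\phi=0$ forces $\phi=0$, contradicting $\phi\ne0$. I expect the main difficulty to lie exactly in this last step: organising the truncation/spectral-sequence bookkeeping and the base-change identifications cleanly, and observing that the non-strict bound $\codim V^i\ge i$ available for a mere $GV$-sheaf would leave $\Ext^i(R^i\hat S\F,L_x)$ possibly nonzero, which is why that weaker hypothesis yields only \emph{generic} global generation whereas $M$-regularity yields genuine continuous global generation.
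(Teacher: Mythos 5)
Your argument is correct, but note that the paper itself contains no proof of this statement: it is imported verbatim from Pareschi--Popa (\cite{PP1}, Proposition 2.13), so the only meaningful comparison is with their original argument, and yours is genuinely different. Pareschi--Popa deduce continuous global generation from a multiplication-map theorem proved via the Fourier--Mukai transform: for a locally free sheaf $H$ satisfying I.T.\ with index $0$, the sum of multiplication maps $\bigoplus_{\xi\in U}H^0(\F\otimes P_\xi)\otimes H^0(H\otimes P_\xi^{\vee})\to H^0(\F\otimes H)$ is surjective; taking $H$ ample and positive enough that $\F\otimes H$ is globally generated, every section of $\F\otimes H$ is a sum of ``reducible'' sections landing in $\G\otimes H$, where $\G\subseteq\F$ is the image of \eqref{cgg}, whence $\G=\F$. (That reducible-sections trick is exactly the one Caucci reuses in the proof of Proposition \ref{generalgg}, there credited to \cite[Proposition 2.12]{PP1}.) You instead dualize the failure of fibrewise surjectivity into a nonzero $\phi\colon\F\to k(x)$, apply Mukai's equivalence, and kill $R\hat S(\phi)$ by an $\Ext$-vanishing forced by the codimension bounds $\codim\Supp R^i\hat S\F\geq\codim V^i(A,\F)>i$ for $i\geq 1$; the vanishing you need is the standard grade inequality (a coherent sheaf supported in codimension $>j$ has vanishing $\EExt^j(-,L_x)$, hence, via the local-to-global spectral sequence, vanishing global $\Ext^j$). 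This route is self-contained on the derived-category side, avoids the auxiliary ample twist and the multiplication-map theorem altogether, and makes visible exactly where the \emph{strict} inequalities of $M$-regularity enter --- which PP's proof buries inside their multiplication theorem; conversely, their approach yields that multiplication statement as a tool of independent use (indeed it is what powers the ``reducible sections'' arguments elsewhere in this paper). Two minor points in your write-up: you say passage to $\mathcal{H}^0$ carries $\Hom_{\DD^b(\hat A)}(R\hat S\F,L_x)$ ``isomorphically'' onto $\Hom(R^0\hat S\F,L_x)$, but your truncation argument proves --- and you only need --- injectivity, since surjectivity would require $\Ext^{i+1}(R^i\hat S\F,L_x)=0$, which the codimension bound does not give; and the step from $\widehat\phi\otimes k(\alpha)=0$ for all $\alpha\in U$ to $\widehat\phi=0$ deserves one more line (a section of a line bundle on a reduced scheme vanishing at every closed point of an open set vanishes on that set, and a subsheaf of a line bundle supported on a proper closed subset of the integral variety $\hat A$ is zero). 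Neither affects correctness.
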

\noindent A sheaf satisfying the condition \eqref{cgg} is said to be \emph{continuously globally generated}. More generally, a sheaf $\G$ on a projective variety $Y$ admitting  a non-trivial morphism $g \colon Y \to B$ to an abelian variety $B$, is said to be \emph{continuously globally generated with respect to} $g$, if 
\[
\bigoplus_{\beta \, \in \, V} H^0(Y, \G \otimes g^*P_{\beta}) \otimes g^*P_{\beta}^{\vee} \rightarrow \G
\]
is surjective for all non-empty open subset $V \subseteq \Pic0 B$. 
This notion was introduced by Pareschi and Popa in \cite{PP1}, and
it has to be considered as a ``continuous'' variant of the usual notion of generation by global sections.

Now, let $a \colon X \rightarrow A$ be a morphism from a smooth projective variety $X$ to an abelian variety $A$.  The higher direct images  $R^j a_* \omega_X$ give non-trivial examples of  $GV$ sheaves (see \cite{ha}). Recently, it has been observed that indeed much more is true:

\begin{theorem}[Chen-Jiang decomposition]\label{cjthm}
Given a morphism $a \colon X \rightarrow A$ and an integer $j \geq 0$, there exists a canonical decomposition
\begin{equation}\label{cjdec}
R^j a_* \omega_X = \bigoplus_i \pi_i^*\F_i \otimes P_{\alpha_i},
\end{equation}
where $\pi_i \colon A \rightarrow A_i$ are quotient of abelian varieties with connected fibers, the $\F_i$'s are $M$-regular sheaves on $A_i$, and $\alpha_i \in \Pic0 A$ are torsion points.
\end{theorem}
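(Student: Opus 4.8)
The plan is to prove the decomposition by induction on $\dim A$, feeding in two foundational results of generic vanishing theory as black boxes. The first input is the theorem of Hacon (extended by Popa--Schnell): for every $j \ge 0$ the sheaf $\F := R^j a_* \omega_X$ is a $GV$-sheaf on $A$. Over a smooth projective base this rests on Kodaira--Nakano vanishing together with Kollár's torsion-freeness and vanishing package for higher direct images of dualizing sheaves; in the compact Kähler setting it rests instead on Saito's theory of pure Hodge modules. The second input is the structure theorem for cohomological support loci, due to Green--Lazarsfeld and, for the arithmetic refinement, to Simpson: each $V^i(A, \F)$ is a finite union of translates $\beta + \widehat{B}$ of abelian subvarieties $\widehat{B} \subseteq \Pic0 A$, with $\beta$ a torsion point. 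By duality $\widehat{B} = \widehat{A'}$ for a quotient $\pi \colon A \twoheadrightarrow A'$ with connected fibres, so each such locus already points to a candidate quotient $A \to A'$ as in the statement.

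With these in hand I would run the induction as follows. If $\F$ is $M$-regular there is nothing to do: the decomposition consists of the single summand $\F$ itself, with $\pi = \mathrm{id}_A$ and $\alpha = 0$. Otherwise $M$-regularity fails, so by the $GV$ inequality some $V^i(A, \F)$ with $i > 0$ has a component $\beta + \widehat{A'}$ of codimension \emph{exactly} $i$. The heart of the matter is a splitting
\[
\F \;=\; \bigl(\pi^* \G \otimes P_{\beta}\bigr) \oplus \F',
\]
where $\pi \colon A \to A'$ is the quotient attached to that extremal component, $\G$ is a sheaf on $A'$, and the complement $\F'$ has strictly fewer or lower-dimensional support loci. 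The clean way to produce this splitting is via the Fourier--Mukai transform: since $\F$ is $GV$, the transform of its derived dual is an honest sheaf on $\Pic0 A$ placed in a single degree, and its torsion subsheaf supported along the extremal translates $\beta + \widehat{A'}$ transforms back precisely to the pulled-back summand $\pi^* \G \otimes P_{\beta}$. One then checks that $\G$ on $A'$ and $\F'$ on $A$ again belong to the class for which the theorem is being proved, so that induction on $\dim A' < \dim A$ applied to $\G$, together with descending induction on the support loci for $\F'$, assembles the full decomposition into $M$-regular pieces pulled back from quotients and twisted by torsion points. Canonicity follows because the pieces are read off intrinsically from the support stratification of the transform.

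The hard part will be exactly this splitting step, together with the assertion that the translating points $\alpha_i$ are torsion; the inductive bookkeeping around it is comparatively formal. For $j = 0$, that is $\F = a_* \omega_X$, the splitting can be extracted from a direct analysis of the Fourier--Mukai transform and the positivity of $a_* \omega_X$, which is the original argument of Chen--Jiang. For arbitrary $j$, and in the Kähler case, the decomposition is not formal: one needs that $R^j a_* \omega_X$ underlies a polarizable pure Hodge module, and one invokes the structure theory of such Hodge modules on abelian varieties and complex tori developed by Schnell and by Pareschi--Popa--Schnell. Their decomposition theorem for these modules, combined with Simpson's identification of the relevant jump loci as torsion translates, is precisely what simultaneously guarantees the splitting into pulled-back summands and the torsion of the $\alpha_i$. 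I expect that reproducing this last Hodge-theoretic ingredient, rather than the inductive scaffolding, is the true content of the theorem.
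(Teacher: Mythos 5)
You should first be aware that the paper does not prove this statement at all: Theorem \ref{cjthm} is quoted as background, attributed to Chen--Jiang \cite{cj} in the generically finite case and to Pareschi--Popa--Schnell \cite{paposc} in complete generality (including the K\"ahler setting), with \cite{vi} cited for a Hodge-module-free treatment of the projective case. So there is no argument in the paper to compare yours against; the only question is whether your sketch stands on its own, and it does not.

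The gap is exactly where you place it, but it is worse than a ``hard step'': the splitting $\F = (\pi^*\G \otimes P_\beta) \oplus \F'$ cannot be produced from your two declared black boxes, because GV-ness together with the Green--Lazarsfeld/Simpson structure of the loci $V^i(A,\F)$ is satisfied by sheaves admitting no Chen--Jiang decomposition whatsoever. Concretely, let $E$ be the nonsplit extension $0 \rightarrow \OO_A \rightarrow E \rightarrow \OO_A \rightarrow 0$ attached to a nonzero class in $H^1(A,\OO_A)$. Every locus $V^i(A,E)$ is contained in $\{\hat{0}\}$, a torsion point, so $E$ is GV and Simpson's conclusion holds vacuously; yet if $E$ decomposed as $\bigoplus_i \pi_i^*\F_i \otimes P_{\alpha_i}$, then since a nonzero $M$-regular sheaf $\F_i$ on $A_i$ has $V^0(A_i,\F_i)$ dense in $\Pic0 A_i$ (by continuous global generation, \cite[Proposition 2.13]{PP1}), the equality $V^0(A,E)=\{\hat{0}\}$ would force every $A_i$ to be a point and every $\alpha_i$ to be $\hat{0}$, i.e.\ $E \cong \OO_A^{\oplus 2}$, a contradiction. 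Your proposed mechanism fails on this example in precisely the predicted way: $M$-regularity of $E$ fails only at $i = g$ via the component $\{\hat{0}\}$, the associated quotient is $A \rightarrow \{\mathrm{pt}\}$, and $\OO_A$ is not a direct summand of $E$. So the torsion part of the Fourier--Mukai transform of a GV sheaf does not in general back-transform to a direct summand; the splitting is a special property of $R^j a_*\omega_X$ whose only known sources are Saito's decomposition theorem combined with the structure theory of polarizable Hodge modules on tori (Schnell, Pareschi--Popa--Schnell), or Chen--Jiang's positivity argument for $j=0$ and $a$ generically finite. You do concede this in your final paragraph, but at that point the proposal is no longer a proof: it is an annotated citation of exactly the references the paper itself invokes, and the inductive scaffolding built around that citation adds nothing those references do not already contain.
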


\noindent This powerful result was proved by Chen-Jiang \cite{cj}, assuming $a$ generically finite, and soon after by Pareschi-Popa-Schnell \cite{paposc} in complete generality and even in the K\"ahler setting. More precisely, in \cite[Theorem A]{paposc}, the authors proved that the Chen-Jiang decomposition \eqref{cjdec} holds true for a  holomorphic mapping $a \colon X \rightarrow T$  from a compact K\"ahler manifold $X$ to a compact complex torus $T$, and, moreover, the $M$-regular sheaves $\F_i$, that are defined on the corresponding compact complex tori $T_i$,  have \emph{projective} support  for all $i$. This allows to say that the $\F_i$'s are continuously globally generated (see \emph{op.cit.}, \S 20), that is precisely what we need here. Indeed,  this fact -- along with the decomposition \eqref{cjdec} itself-- is more than sufficient to prove  Theorem \ref{mainthm}.

See also \cite{vi} for a simplified proof of Theorem \ref{cjthm} in the projective case, following the lines of \cite{cj}.
The Chen-Jiang decomposition  also holds  true for the pushforwards of pluricanonical bundles $a_* \omega_X^{\otimes m}$, $m\geq 2$, of a smooth projective variety $X$ by Lombardi-Popa-Schnell \cite{loposc}, and it was recently applied to the study of derived invariants in \cite{cp} and \cite{clp}.

\section{Proof of Theorem \ref{mainthm}}

We are now ready to give the proof of our first result.
We may assume that $X$ is smooth projective,
 given that the proof goes completely analogous in the compact K\"ahler case (see the comment below Theorem \ref{cjthm}).    
Let 
$a_X : X \rightarrow \Alb X$ 
be the Albanese morphism of $X$, and
\begin{equation}\label{CJ1}
{a_X}_*\omega_X = \bigoplus_i \pi_i^* \mathcal{F}_i \otimes P_{\alpha_i}
\end{equation}
be the Chen-Jiang decomposition of ${a_X}_* \omega_X$ (see \S 2.2). Hence, for each index $i$, $\pi_i \colon \Alb X \rightarrow A_i$ is a quotient of abelian varieties with connected fibers, $\F_i$ is an $M$-regular sheaf on $A_i$, and $\alpha_i \in \Pic0 X$ is a  torsion point. The sheaves $\F_i$ are continously globally generated by \cite[Proposition 2.13]{PP1}. This means, in particular, that  the sum of evaluation maps
\begin{equation}\label{eqproof1}
\bigoplus_{\alpha \, \in \, \Pic0 A_i} H^0(A_i, \F_i \otimes P_\alpha) \otimes P_\alpha^{\vee} \rightarrow \F_i
\end{equation}
is surjective. Note that the dual morphism $\widehat{\pi_i} \colon \Pic0 A_i \hookrightarrow \Pic0 X$ is injective, hence we simply denote by $\alpha = \widehat{\pi_i}(\alpha) \in \Pic0 X$ the image of an element $\alpha \in \Pic0A_i$.  Moreover, by the projection formula,
\begin{equation*}\label{pf}
H^0(\Alb X, \pi_i^*\F_i \otimes P_\alpha) =
H^0(A_i, \F_i \otimes P_\alpha)
\end{equation*}
if $\alpha \in \Pic0 A_i$.
 Hence,  applying $\pi_i^*$ to \eqref{eqproof1} and tensoring by $P_{\alpha_i}$, 
we get that, a fortiori, the morphism
\[
\bigoplus_{\alpha \, \in \, \Pic0 X} H^0(\Alb X, \pi_i^*\F_i \otimes P_\alpha) \otimes P_\alpha^{\vee} \otimes P_{\alpha_i} \rightarrow 
\pi_i^*\F_i \otimes P_{\alpha_i}
\]
is surjective.  
 Now we change notation as follows: we call $\alpha_i - \alpha =: - \beta  \in \Pic0 X$. So we have that the map 
\begin{equation}\label{eqproof3}
\bigoplus_{\beta \, \in \,  \Pic0 X} H^0(\Alb X, \pi_i^*\F_i \otimes P_{\alpha_i} \otimes P_{\beta} ) \otimes P_\beta^{\vee} \rightarrow \pi_i^*\F_i \otimes P_{\alpha_i}
\end{equation}
is surjective, and this holds true for any index $i$ appearing in the Chen-Jiang decomposition of ${a_X}_*\omega_X$. Therefore, taking the sum over $i$ in \eqref{eqproof3} -- and using \eqref{CJ1} --, we get the following
\begin{lemma}\label{lemma1}
Let $X$ be a smooth projective variety, or a compact K\"ahler manifold. Then, the sum of evaluation maps
\begin{equation}\label{eqproof2}
\bigoplus_{\beta \, \in \, 
 \Pic0 X} H^0(\Alb X, {a_X}_*\omega_X \otimes P_{\beta} ) \otimes P_\beta^{\vee} \rightarrow {a_X}_*\omega_X
\end{equation}
is surjective.
\end{lemma}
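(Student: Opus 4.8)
The plan is to prove that ${a_X}_*\omega_X$ is continuously globally generated by decomposing the problem along the Chen-Jiang decomposition \eqref{CJ1}. The class of continuously globally generated sheaves is closed under (finite) direct sums: for $\bigoplus_i \G_i$ the source $\bigoplus_\beta H^0\bigl(\bigoplus_i \G_i \otimes P_\beta\bigr)\otimes P_\beta^\vee$ splits, after identifying $H^0$ of a direct sum with the direct sum of the $H^0$'s, into the individual evaluation maps, so surjectivity of each summand's map forces surjectivity of the total. It therefore suffices to establish the property for each term $\pi_i^*\F_i \otimes P_{\alpha_i}$ occurring in \eqref{CJ1}.

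For a fixed $i$ I would start from the continuous global generation of the $M$-regular sheaf $\F_i$ on $A_i$, i.e.\ the surjectivity of \eqref{eqproof1} provided by \cite[Proposition 2.13]{PP1}, and then push this surjection through the two operations $\pi_i^*(-)$ and $(-)\otimes P_{\alpha_i}$. The pullback $\pi_i^*$ is right exact and so preserves epimorphisms; the projection formula \eqref{pf} (which holds because $\pi_i$ has connected fibers, so ${\pi_i}_*\OO_{\Alb X}=\OO_{A_i}$) identifies $H^0(\Alb X,\pi_i^*\F_i\otimes P_\alpha)$ with $H^0(A_i,\F_i\otimes P_\alpha)$, while the dual morphism $\widehat{\pi_i}\colon\Pic0 A_i\hookrightarrow\Pic0 X$ matches $\pi_i^*P_\alpha$ with $P_{\widehat{\pi_i}(\alpha)}$. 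Tensoring by the fixed $P_{\alpha_i}$ only translates the $\Pic0$-parameter, and the substitution $\beta:=\alpha-\alpha_i$ converts $P_\alpha^\vee\otimes P_{\alpha_i}$ into $P_\beta^\vee$, turning the resulting surjection into \eqref{eqproof3}.

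To conclude, I would enlarge the index set of \eqref{eqproof3} from the shifted image of $\Pic0 A_i$ to all of $\Pic0 X$ — this merely adjoins extra summands to the source and cannot destroy surjectivity — and then sum \eqref{eqproof3} over the finitely many $i$, invoking \eqref{CJ1} and the compatibility of $H^0$ with direct sums, to obtain the surjectivity of \eqref{eqproof2}.

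I expect the only delicate point to be the transfer of continuous global generation through $\pi_i^*$: one must check that widening the parameter from $\Pic0 A_i$ to $\Pic0 X$ is harmless and that the reindexing is carried out so that the twist appearing in the source of the evaluation map is genuinely $P_\beta^\vee$. Everything else is formal, resting on the two substantial inputs — the Chen-Jiang decomposition (Theorem \ref{cjthm}) and the implication ``$M$-regular $\Rightarrow$ continuously globally generated''.
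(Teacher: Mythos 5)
Your proposal is correct and follows essentially the same route as the paper: both proofs reduce via the Chen--Jiang decomposition \eqref{CJ1} to the continuous global generation of the $M$-regular summands $\F_i$ (\cite[Proposition 2.13]{PP1}), transfer this through $\pi_i^*$ and the twist by $P_{\alpha_i}$ using the projection formula \eqref{pf} and the reindexing $\beta = \alpha - \alpha_i$, enlarge the parameter set from $\Pic0 A_i$ to $\Pic0 X$ ``a fortiori'', and sum over the finitely many $i$. The only difference is organizational (you state the closure of continuous global generation under finite direct sums up front, while the paper sums the surjections \eqref{eqproof3} at the end), which is immaterial.
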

Now, if  we apply $a_X^*$ to \eqref{eqproof2} and post-compose with the natural morphism $a_X^* {a_X}_* \omega_X \rightarrow \omega_X$, we recover the evaluation morphism
\begin{equation}\label{eqfinal0}
\xymatrix{\bigoplus_{\beta \, \in \, \Pic0 X} H^0(X, \omega_X \otimes P_{\beta} ) \otimes P_\beta^{\vee} \ar@/^1.5pc/[rr]^{} \ar@{->>}[r] &a_X^* {a_X}_*\omega_X \ar[r] &\omega_X,
}
\end{equation}
where we used that $H^0(\Alb X, {a_X}_*\omega_X \otimes P_{\beta} ) = H^0(X, \omega_X \otimes P_{\beta} )$ for all $\beta \in \Pic0 X$. 
The proof of Theorem \ref{mainthm}  follows from \eqref{eqfinal0}. Indeed, if $x \in X$ is not contained in the relative base locus of $\omega_X$ with respect to $a_X$, then the sum of evaluation maps
\[
\bigoplus_{\beta \, \in \,
\Pic0 X} H^0(X, \omega_X \otimes P_{\beta} ) \otimes P_\beta^{\vee} \rightarrow \omega_X,
\]
is surjective at $x$ by \eqref{eqfinal0}. Therefore, by definition, $x \notin \PBs|\omega_X|$. 
The other inclusion is completely similar (and indeed easier, since it does not need Lemma \ref{lemma1}).
\qed

\begin{remark}\label{rmkgenerale}
Let $a \colon X \rightarrow A$ be a morphism from a smooth projective variety $X$ to an abelian variety $A$. The properties of ${a_X}_* \omega_X$ we used in the proof of Theorem \ref{mainthm},  hold also true for $a_* \omega_X$ (see \S 2.2). Therefore, if we define
the \emph{$a$-paracanonical base locus}  as 
\[
\PBs_{a}|\omega_X| := \bigcap_{\alpha\, \in \, \Pic0 A} \Bs|\omega_X \otimes a^*P_{\alpha}|,
\]
we have that this is equal to the relative base locus of $\omega_X$ with respect to the morphism $a \colon X \rightarrow A$. Note that $\PBs|\omega_X| \subseteq \PBs_a |\omega_X|$, for any $a$. 

More generally, the same happens for a holomorphic map $a : X \rightarrow T$ from a compact K\"ahler manifold $X$ to a compact complex
torus $T$, and, in the projective setting,  
 for $a_*\omega_X^{\otimes m}$ with $m\geq 2$ 
(see the comment below Theorem \ref{cjthm}), once one defines  the \emph{$a$-parapluricanonical base locus} $\PBs_{a}|\omega_X^{\otimes m}|$ likewise.
\end{remark}

\section{Examples}

In this section, we present some non-trivial examples where the hypothesis of Corollary \ref{maincor} can be (quickly) checked. 

\subsection{Varieties with finite Albanese morphism} As already observed, it is well known  that, if a variety $X$ has finite Albanese morphism $a_X$, then its canonical bundle $\omega_X$ is $a_X$-relatively globally generated. 
Thus, $\DD^b(X)$ is indecomposable if $a_X$ is finite.
We refer the reader to the paper \cite{kaabelian} for some fundamental structural results on varieties with finite Albanese morphism, and to \cite{pi} for a stronger theorem on indecomposability, in this case.

\subsection{Symmetric products of curves}
Let $C$ be a smooth projective curve of genus $g \geq 2$. Let $J(C)$ be its Jacobian variety. 
We denote by $C^{(n)}$  the $n$-\emph{th} symmetric product of  $C$. 
Let
\begin{equation}\label{albsym}
a_n := a_{C^{(n)}} : C^{(n)} \rightarrow J(C), \quad D \mapsto \OO_C(D- np_0)
\end{equation}
be the Albanese morphism of $C^{(n)}$, where $p_0 \in C$ is a fixed point.

In \cite{li}, by algebraically moving techniques,  Lin proved that $\PBs|\omega_{C^{(n)}}|$ is empty if $n \leq g-1$. In particular, $\DD^b(C^{(n)})$ has no non-trivial SODs in that range. 
By making use of Theorem \ref{mainthm}, 
we  see how this fact also follows from a classical description of the symmetric product of curves appearing in \cite{sch}.\footnote{Interestingly enough,  the main result of \cite{sch} was also used by Belmans and Krug (see Remark \ref{ultimormk} below) to give a simpler proof of a result of Toda constructing a non-trivial semi-orthogonal decomposition of $\DD^b(C^{(n)})$, when $n \geq g \geq 2$.}
Namely, we have
\begin{proposition}\label{propcur}
Let $n \leq g-1$. Then the canonical bundle $\omega_{C^{(n)}}$ is $a_n$-relatively globally generated.
\end{proposition}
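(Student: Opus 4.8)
The plan is to exploit Theorem \ref{mainthm}: since $\omega_{C^{(n)}}$ is $a_n$-relatively globally generated if and only if $\PBs|\omega_{C^{(n)}}|=\varnothing$, it suffices to prove that the adjunction map $a_n^*{a_n}_*\omega_{C^{(n)}}\to\omega_{C^{(n)}}$ is surjective. The generic vanishing input (Lemma \ref{lemma1}) is automatic here, so the entire content is this surjectivity, a statement about the geometry of the Abel--Jacobi map $a_n$. First I would recall the classical description of $a_n$ (see \cite{sch}): over a point $[L]$ of the image $W_n\subseteq J(C)$ the fibre is the complete linear system $a_n^{-1}([L])=|L|\cong\PP^r$ with $r=h^0(C,L)-1$, and for $n\le g-1$ the morphism $a_n\colon C^{(n)}\to W_n$ is birational, since a generic such $L$ has $r=0$.

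The heart of the argument is the computation of the restriction of $\omega_{C^{(n)}}$ to a fibre $F=|L|\cong\PP^r$. Writing $L=\OO_C(D)$ and using $0\to\OO_C\to L\to\OO_D(D)\to 0$, the tangent/normal analysis of $F$ inside the smooth variety $C^{(n)}$ produces, via multiplication by the tautological section of $L$, the normal bundle sequence on $\PP^r$
\[
0\to N_{F/C^{(n)}}\to H^1(C,\OO_C)\otimes\OO_{\PP^r}\to H^1(C,L)\otimes\OO_{\PP^r}(1)\to 0 .
\]
Since $h^1(C,L)=g-n+r$, taking determinants gives $\det N_{F/C^{(n)}}=\OO_{\PP^r}(n-g-r)$, and adjunction $\omega_{C^{(n)}}|_F=\omega_{\PP^r}\otimes(\det N_{F/C^{(n)}})^{-1}$ yields
\[
\omega_{C^{(n)}}|_{\PP^r}\cong\OO_{\PP^r}(g-1-n).
\]
As a check, for $g=2$, $n=2$ the map $a_2$ is the blow-down of $J(C)$ at a point and this reads $\OO(-1)$, the class of the exceptional curve. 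Thus precisely when $n\le g-1$ one has $g-1-n\ge 0$, so $\omega_{C^{(n)}}$ restricts to a globally generated line bundle, with vanishing higher cohomology, on every fibre of $a_n$.

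The remaining, and main, difficulty is to pass from this \emph{fibrewise} global generation to genuine \emph{relative} global generation, i.e.\ to the surjectivity of $a_n^*{a_n}_*\omega_{C^{(n)}}\to\omega_{C^{(n)}}$: because the fibre dimension jumps over the loci where $h^0(L)\ge 2$, the morphism $a_n$ is not flat and the theorem on cohomology and base change does not apply directly, so one must check that the fibrewise sections really extend. I would settle this in either of two ways. One option is cohomological: as $a_n$ is birational with $C^{(n)}$ smooth, Grauert--Riemenschneider gives $R^{>0}{a_n}_*\omega_{C^{(n)}}=0$, and together with the fibrewise vanishing $H^{>0}(\PP^r,\OO(g-1-n))=0$ this controls ${a_n}_*\omega_{C^{(n)}}$ well enough to lift the fibrewise sections. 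The cleaner, self-contained option is to exhibit the paracanonical sections directly and apply Theorem \ref{mainthm}: for general $\eta\in\Pic0 C$ and any effective divisor $D$ of degree $n$, Riemann--Roch gives $h^0(\omega_C\otimes\eta(-D))=(g-1-n)+h^0(\OO_C(D)\otimes\eta^{-1})$, and the last term vanishes for general $\eta$, since $\OO_C(D)\otimes\eta^{-1}$ is then a general line bundle of degree $n\le g-1$; hence $D$ imposes independent conditions on the paracanonical system $|\omega_C\otimes\eta|$, and through the classical identification of paracanonical forms on $C^{(n)}$ with exterior products of sections of $\omega_C\otimes\eta$ this produces a section of $\omega_{C^{(n)}}\otimes P_\alpha$ not vanishing at $D$. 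Since $D$ is arbitrary, $\PBs|\omega_{C^{(n)}}|=\varnothing$, as desired.
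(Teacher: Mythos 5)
Your fibrewise computation $\omega_{C^{(n)}}\big|_{|L|}\cong\OO_{\PP^r}(g-1-n)$ is correct, but as you yourself note it is not the proof: the whole difficulty is globalizing it, and your two proposed endings have very different status. Option (a) is a genuine gap. For the non-flat morphism $a_n$, Grauert--Riemenschneider vanishing $R^{>0}{a_n}_*\omega_{C^{(n)}}=0$ together with fibrewise global generation does \emph{not} formally yield surjectivity of $a_n^*{a_n}_*\omega_{C^{(n)}}\to\omega_{C^{(n)}}$: by the theorem on formal functions one must lift sections from the reduced fibre $|L|$ to all of its infinitesimal neighborhoods, and the obstructions live in $H^1$ of $\omega_{C^{(n)}}\big|_{|L|}$ twisted by the graded pieces of the ideal of the fibre, which are poorly behaved exactly along the jumping loci where $a_n$ fails to be flat. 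The phrase ``controls ${a_n}_*\omega_{C^{(n)}}$ well enough to lift the fibrewise sections'' is the entire point at issue and is left unproved.

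Option (b) is essentially correct, but it is a genuinely different route from the paper's -- in fact the opposite one. You prove $\PBs|\omega_{C^{(n)}}|=\varnothing$ directly (fix $D$; Riemann--Roch plus the fact that a general line bundle of degree $n\le g-1$ is non-effective gives surjectivity of the evaluation $H^0(C,\omega_C\otimes\eta)\to H^0(D,(\omega_C\otimes\eta)|_D)$ for general $\eta$, and the determinantal description of paracanonical sections produces a section non-vanishing at $D$), and then you read Theorem \ref{mainthm} backwards -- only its easy inclusion, the one not needing Lemma \ref{lemma1} -- to get relative global generation. The paper instead proves relative global generation \emph{without} any paracanonical input: it invokes \cite[Proposition 10]{sch}, which says that $a_n^*{a_n}_*\OO_{C^{(n)}}(x)\to\OO_{C^{(n)}}(x)$ is surjective for the divisor $x=C^{(n-1)}+p_0$, writes $\omega_{C^{(n)}}=\theta+(g-n-1)x$ in $\mathrm{NS}(C^{(n)})$, handles $(g-n-1)x$ by an induction on the multiple (tensoring evaluation maps) and $\theta$ by the projection formula, and only then applies Theorem \ref{mainthm} in the forward direction to \emph{re-derive} Lin's emptiness theorem. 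So your option (b) proves the literal statement, but it makes the paper's intended application circular, since what you prove along the way is precisely Lin's result; moreover it silently relies on the classical identification of $H^0(C^{(n)},\omega_{C^{(n)}}\otimes P_\alpha)$ with $\wedge^n H^0(C,\omega_C\otimes\eta)$ (Schwarzenberger, Mattuck), which you should cite precisely and verify also at non-reduced divisors $D$.
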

\begin{proof}
Let 
\[
i_{n-1} : C^{(n-1)} \hookrightarrow C^{(n)}, \quad D \mapsto D + p_0,
\]
and  denote its image  by $x = i_{n-1}(C^{(n-1)}) \subseteq C^{(n)}$.
Let $\Theta$ be the theta divisor on $J(C)$, and 
  $\theta \in \mathrm{NS}(C^{(n)})$  the class of the pullback of $\Theta$ via $a_n$.
We recall that algebraic and numerical
equivalence of divisors coincide on $C^{(n)}$, and that
\[
\omega_{C^{(n)}} = \theta + (g-n-1) x 
\]
holds in $\mathrm{NS}(C^{(n)})$ (see, e.g., \cite[Lemma 2.1]{bigole}).
We prove that, for any $m \geq 0$, the map
\[
a_n^* {a_n}_* \OO_{C^{(n)}}(\theta + m x) \to \OO_{C^{(n)}}(\theta + m x) 
\]
is surjective. The case $m = g - n - 1$ is the desired result. Since $\theta$  is a pullback line bundle via $a_n$, it is enough to prove this without the twist by $\theta$. For $m=0$ this is evident, while for $m=1$ this holds by
 \cite[Proposition 10]{sch}.
Assume $m \geq 2$. Then, we have that $a_n^* {a_n}_* \OO_{C^{(n)}}(mx) \rightarrow \OO_{C^{(n)}}(mx)$ is surjective by induction on $m$, thanks to the following commutative diagram
\[
\xymatrix{
a_n^* ({a_n}_* \OO_{C^{(n)}}((m-1)x) \otimes {a_n}_* \OO_{C^{(n)}}(x))  \ar[r]^-{\mathrm{ev} \otimes \mathrm{ev}} \ar[d] &\OO_{C^{(n)}}((m-1)x) \otimes \OO_{C^{(n)}}(x) = \OO_{C^{(n)}}(mx) \ar@{=}[d]\\
a_n^* ({a_n}_* \OO_{C^{(n)}}(mx)) \ar[r]^{\mathrm{ev}} &\OO_{C^{(n)}}(mx).}
\] 
\end{proof}

\subsection{Hilbert schemes of points on surfaces} Here we prove Theorem \ref{hilbthm}.

Let $S$ be a smooth projective surface. Given an integer $n >0$, we denote by $S^{[n]}$ the Hilbert scheme of points of length $n$ on $S$, and by $S^{(n)}$ the $n$-fold symmetric product. It is well-known (see \cite{fo}) that $S^{[n]}$ is smooth and that the Hilbert-Chow morphism $\pi \colon S^{[n]} \rightarrow S^{(n)}$ is a canonical resolution of singularities of $S^{(n)}$.
Moreover,
\begin{equation}\label{albhilb}
a_{S^{[n]}} : S^{[n]} \xrightarrow{\pi} S^{(n)} \xrightarrow{f_n} \Alb S
\end{equation}
is the Albanese morphims of $S^{[n]}$, where $f_n$ is the morphism induced by $a_S$ by addition on $\Alb S$. In particular, 
$\Alb (S^{[n]}) \cong \Alb S$ (see \cite{fo2}).\footnote{We also refer to \cite{hui1, hui2} for a detailed account on these morphisms.}
Let us point out that  
\eqref{albhilb} is a natural generalization of the morphism \eqref{albsym}.

The Hilbert-Chow morphism $\pi$ is a crepant resolution, i.e.,
$\omega_{S^{[n]}} = \pi^*\omega_{S^{(n)}}$, and $S^{(n)}$ is a Gorenstein variety, so that $\omega_{S^{(n)}}$ is a line bundle.
It is easy to see that \emph{$\omega_{S^{[n]}}$ is $a_{S^{[n]}}$-relatively globally generated, if $\omega_{S^{(n)}}$ is $f_{n}$-relatively globally generated}. Indeed, since $\pi_*\OO_{S^{[n]}} = \OO_{S^{(n)}}$, by the projection formula we have that 
\[
a_{S^{[n]}}^* {a_{S^{[n]}}}_* \omega_{S^{[n]}} = \pi^*f_n^*{f_n}_*\pi_* \pi^* \omega_{S^{(n)}} = \pi^*f_n^*{f_n}_*\omega_{S^{(n)}}.
\]
Hence, if 
\[
f_n^*{f_n}_*\omega_{S^{(n)}} \rightarrow \omega_{S^{(n)}}
\]
is surjective,  by applying $\pi^*$ to it we get that
\[
a_{S^{[n]}}^* {a_{S^{[n]}}}_* \omega_{S^{[n]}} \rightarrow \omega_{S^{[n]}}
\]
is surjective, too.

So we reduced to prove 
\begin{proposition}\label{propreduction}
If $\omega_S$ is continuously globally generated with respect to $a_S$, then 
\[
f_n^* {f_n}_* \omega_{S^{(n)}} \rightarrow \omega_{S^{(n)}}
\]
is surjective.
\end{proposition}
\begin{proof}
Let $S^n$  be the usual product variety. We consider the following commutative diagram
\begin{equation}\label{quotient}
\xymatrix{
S^n  \ar[r]^-{q} \ar[d]^-{a_{S^n}} &S^{(n)}  \ar[d]^-{f_n}  \\
(\Alb S)^n  \ar[r]^-{\sigma} & \Alb S}
\end{equation}
where $q$  is the quotient morphism by the action of the symmetric group $\mathfrak{S}_n$ on $S^n$, $a_{S^n} = (a_S)^n$ is  the Albanese morphism  of $S^n$, and $\sigma$ is the addition morphism.

We make use of the following technical  fact:
\begin{proposition}\label{generalgg}
Let $X$ be a smooth projective variety such that
 $$\bigcap_{\alpha \in U} \Bs|\omega_X \otimes P_{\alpha}| = \varnothing,$$
for any non-empty open subset $U \subseteq \Pic0 X$.
Let $n \geq 1$ be a natural number.
If $\sigma_n \colon \Alb X ^n \rightarrow \Alb X$ denotes the addition morphism,
then
the sum of  evaluation maps
\[
ev_{n,\, U} : \bigoplus_{\alpha \, \in \, U} H^0(X^n, \omega_{X^n} \otimes \sigma_n^* P_{\alpha}) \otimes \sigma_n^* P_{\alpha}^{\vee} \rightarrow \omega_{X^n} 
\]
is surjective, for all non-empty open subset $U \subseteq \Pic0 X$.\footnote{Otherwise said: 
if $\omega_X$ is continuously globally generated with respect to $a_X$,
 then $\omega_{X^n}$ is continuously globally generated with respect to the composition $a_n := \sigma_n \circ a_{X^n} \colon X^n \rightarrow \Alb X$.}
\end{proposition}

\noindent Granting it for the moment, we apply this to the surface $X = S$.
 So, in particular, 
$$\bigcap_{\alpha \, \in \, \Pic0 S }\Bs|\omega_{S^n} \otimes \sigma^* P_{\alpha}| = \varnothing.$$  
Since $S^n$ is quasi-compact, there exists a finite number $N$ such that, for some $\alpha_1, \ldots, \alpha_N \in \Pic0 S$, the intersection $\bigcap_{k=1}^N \Bs|\omega_{S^n} \otimes \sigma^* P_{\alpha_k}|$ is empty, i.e., 
the sum of evaluation  maps
\begin{equation}\label{eqfinal}
\bigoplus_{k=1}^N H^0(S^n, \omega_{S^n} \otimes \sigma^* P_{\alpha_k} ) \otimes \sigma^* P_{\alpha_k}^{\vee} \rightarrow \omega_{S^n}
\end{equation}
is surjective. 
Looking at the diagram \eqref{quotient}, note that  
$q_* \sigma^* P_{\alpha_k}^{\vee} = q_* a_{S^n}^* \sigma^* P_{\alpha_k}^{\vee} = q_* q^* f_n^* P_{\alpha_k}^{\vee} = f_n^* P_{\alpha_k}^{\vee} \otimes q_* \OO_{S^n}$
for all $k$, by the projection formula. Moreover, since $\omega_{S^n} = q^*\omega_{S^{(n)}}$, we have that $q_* \omega_{S^n} = \omega_{S^{(n)}} \otimes q_*\OO_{S^n}$ as well.
Since $q$ is finite,  we see,  by
taking the  pushforward $q_*$ in \eqref{eqfinal}, that the morphism
\begin{equation*}\label{surj0}
\bigoplus_{k=1 }^N H^0(S^n, \omega_{S^n} \otimes \sigma^* P_{\alpha_k} ) \otimes f_n^* P_{\alpha_k}^{\vee} \otimes q_* \OO_{S^n} \rightarrow \omega_{S^{(n)}} \otimes q_*\OO_{S^n}
\end{equation*}
is surjective, too.  Now, since $\OO_{S^{(n)}}$ is contained in $q_* \OO_{S^n}$ as a direct summand, there exists a surjective morphism $q_* \OO_{S^n} \to \OO_{S^{(n)}}$ and we get the following commutative diagram
\[
\xymatrix{
\bigoplus_{k=1 }^N H^0(S^n, \omega_{S^n} \otimes \sigma^* P_{\alpha_k} ) \otimes f_n^* P_{\alpha_k}^{\vee} \otimes q_* \OO_{S^n} \ar@{->>}[r]
 \ar@{->>}[d] & \omega_{S^{(n)}} \otimes q_*\OO_{S^n} \ar@{->>}[d] \\
\bigoplus_{k=1 }^N H^0(S^n, \omega_{S^n} \otimes \sigma^* P_{\alpha_k} ) \otimes f_n^* P_{\alpha_k}^{\vee}  \ar[r]
  & \omega_{S^{(n)}}  
}
\]
So, the bottom horizontal map is surjective, and, 
 by adjunction,  it factorizes as
\begin{equation*}\label{fact5}
\bigoplus_{k=1 }^N H^0(S^n, \omega_{S^n} \otimes \sigma^* P_{\alpha_k} ) \otimes f_n^* P_{\alpha_k}^{\vee} \rightarrow 
f_n^* {f_n}_* \omega_{S^{(n)}} \rightarrow
 \omega_{S^{(n)}}.
\end{equation*}

In order to  conclude the proof of Proposition \ref{propreduction} -- and hence
of Theorem \ref{hilbthm} --
it only remains to give the 
\begin{proof}[Proof of Proposition \ref{generalgg}]
The proof is essentially the same  of \cite[Proposition 2.12]{PP1} and it uses the method of ``reducible sections''. 
We proceed by induction on $n$.
For $n=1$ there is nothing to prove.
Let us denote by $\pi_i \colon X^n \rightarrow X$ the $i$-\emph{th} projection.
Since $\alpha \in \Pic0 X$, we have 
\[
\sigma_n^* P_{\alpha} = \pi_1^*P_{\alpha} \otimes \ldots \otimes \pi_n^* P_{\alpha} =: P_{\alpha}^{\boxtimes n},
\]
where $\sigma_n \colon \Alb X^n \rightarrow \Alb X$ is the addition morphism.
This can be easily proved from the see-saw principle (see, e.g., \cite[pp.\ 74-75]{mu}) and induction on $n$. 
  
Let $\bar{x} = (x_1, \ldots, x_n) \in X^n$ be an arbitrary point. 
Since we are assuming that $\omega_X$ is continuously globally generated with respect to $a_X$,   
there  exists a non-empty open subset $U_{x_n} \subseteq \Pic0 X$ such that $x_n \notin \Bs|\omega_X \otimes P_{\alpha}|$ for \emph{all} $\alpha \in U_{x_n}$.\footnote{Proof: Let $\mathcal{I}_{x_n}$ be the ideal sheaf of $x_n$ in $X$.
By upper semicontinuity, there exist  open subsets $U_1, U_2 \subseteq \Pic0 X$ such that $h^0(X, \omega_X \otimes P_{\alpha})$ (resp.\ $h^0(X, \omega_X \otimes \mathcal{I}_{x_n} \otimes P_{\alpha})$) takes its minimal value, when $\alpha \in U_1$ (resp.\ $\alpha \in U_2$).  Let $U_{x_n} := U_1 \cap U_2$. Then, 
since by the hypothesis the morphism
\[
\bigoplus_{\alpha \in U_{x_n}} H^0(X, \omega_X \otimes P_{\alpha}) \otimes P_{\alpha}^{\vee} \to \omega_X
\] 
is surjective at $x_n$, we must have that
$h^0(X, \omega_X \otimes \mathcal{I}_{x_n} \otimes P_{\alpha}) < h^0(X, \omega_X \otimes P_{\alpha})$ for all $\alpha \in U_{x_n}$. This implies that the evaluation morphism $H^0(X, \omega_X \otimes P_{\alpha}) \otimes \OO_X \to \omega_X \otimes P_{\alpha}$ is surjective at $x_n$, for all $\alpha \in U_{x_n}$. 
}

Let $U \subseteq \Pic0 X$ be a non-empty open subset, and denote $V_{x_n} := U \cap U_{x_n}$.
Let $p \colon X^n =  X^{n-1} \times X \rightarrow X^{n-1}$ be  the first projection.
We consider the following commutative diagram where maps are given by evaluation:\footnote{Note that $H^0(p^*\omega_X^{\boxtimes n-1} \otimes p^* P_{\alpha}^{\boxtimes n-1}) \neq 0$ if and only if $H^0(\pi_n^*\omega_X \otimes \pi_n^* P_{\alpha}) \neq 0$.}
\[
\resizebox{165mm}{!}{
\xymatrix{
\bigoplus_{\alpha \in V_{x_n}} H^0(p^*\omega_{X}^{\boxtimes n-1} \otimes p^*P_{\alpha}^{\boxtimes n-1}) \otimes H^0(\pi_n^*\omega_X \otimes \pi_n^*P_{\alpha}) \otimes \sigma_n^* P_{\alpha}^{\vee}  \ar@{=}[r] \ar[d] & \bigoplus_{\alpha \in  V_{x_n}} H^0(\omega_{X^n} \otimes \sigma_n^*P_{\alpha}) \otimes \sigma_n^* P_{\alpha}^{\vee} \ar[d] \\
\bigoplus_{\alpha \in V_{x_n}} H^0(p^*\omega_{X}^{\boxtimes n-1} \otimes p^*P_{\alpha}^{\boxtimes n-1}) \otimes p^*(P_{\alpha}^{\vee})^{\boxtimes n-1} \otimes \pi_n^* \omega_X \otimes \mathbb{C}(\bar{x}) \ar[r] & \omega_{X^n} \otimes \mathbb{C}(\bar{x})}}
\]
By the inductive hypothesis,  the bottom horizontal morphism  is surjective.\footnote{Indeed,  the projection formula gives $$H^0(p^*\omega_{X}^{\boxtimes n-1} \otimes p^*P_{\alpha}^{\boxtimes n-1}) = H^0(p_*\OO_{X^n} \otimes \omega_X^{\boxtimes n-1} \otimes P_{\alpha}^{\boxtimes n-1}) = H^0(\omega_{X^{n-1}} \otimes \sigma_{n-1}^*P_{\alpha}).$$} 
Moreover, since $V_{x_n} \subseteq U_{x_n}$, the left vertical morphism is surjective as well. Therefore, by Nakayama's lemma, 
$$\bar{x} \notin \mathrm{Coker}(ev_{n,\, V_{x_n}}) = \bigcap_{\alpha \in V_{x_n}} \Bs|\omega_{X^n} \otimes \sigma_n^* P_{\alpha}|.$$ 
Since $V_{x_n} \subseteq U$, one has
 $\bigcap_{\alpha \in U} \Bs|\omega_{X^n} \otimes \sigma_n^* P_{\alpha}| \subseteq \bigcap_{\alpha \in V_{x_n}} \Bs|\omega_{X^n} \otimes \sigma_n^* P_{\alpha}|$, and, 
therefore,  $\bar{x} \notin \mathrm{Coker}(ev_{n,\, U})$. Since the point $\bar{x} \in X^n$ was arbitrary, we finally have 
 that $ev_{n,\,  U}$ is surjective.
\end{proof}
This concludes the proof of Theorem \ref{hilbthm}.
\end{proof}

\begin{remark}\label{ultimormk}
As observed, the case of surfaces treated in \S 4.3 is the natural generalization of the one of curves of \S 4.2.
However, comparing Proposition \ref{propcur} and Theorem \ref{hilbthm}, 
it appears a  visible difference   on the index $n$ in the two cases.
The reason behind that
has already been properly explained in \cite[Remark 5.2]{baetal}. 

In fact, by \cite[Corollary 5.11]{toda}, there exists a non-trivial semi-orthogonal decomposition of $\DD^b(C^{(n)})$, where $C$ is a curve of genus $g \geq 2$, if $n \geq g$ (see also \cite[Theorem D]{bekr} and \cite[Corollary 3.10]{jile}). Note that this is what is expected by Conjecture \ref{conj1}, because $C^{(n)}$ is not minimal in the range $n \geq g \geq 2$.
\end{remark}

\section{Indecomposability in families: proof of Theorem  \ref{family}}

In this section we consider indecomposability of derived categories in certain families of varieties. Namely, taking  notations as in the Introduction, we assume that $f \colon \cX \rightarrow T$ is a smooth projective family of varieties,
 parametrized by a complex algebraic  variety $T$,  
such that the relative Albanese morphism for $f$, denoted by 
$$\varphi \colon \cX \rightarrow \Alb \cX / T,$$
 is a smooth morphism over $T$.

What we need to prove is the following 
\begin{proposition}\label{upper}
Under the above assumption,
the function
\[
u : T \rightarrow \mathbb{N} \cup \{- \infty \}, \quad t \mapsto u(t) := \dim \PBs| \omega_{\cX_t}| 
\]
is upper semi-continuous.
\end{proposition}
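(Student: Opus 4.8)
The plan is to exhibit the paracanonical base loci of the fibers as the fibers of a single closed subset $Z \subseteq \cX$, and then to invoke the upper semi-continuity of fiber dimension for proper maps. Write $\mathcal{A} := \Alb\cX/T$ for the relative Albanese variety and $\omega_{\cX/T}$ for the relative dualizing sheaf of $f$ (a line bundle, as $f$ is smooth). First I would form the relative adjunction morphism $\varphi^*\varphi_*\omega_{\cX/T} \to \omega_{\cX/T}$, let $\mathcal{C}$ be its cokernel, and set $Z := \Supp\mathcal{C}$, a closed subset of $\cX$. Since $f$ is projective and $Z$ is closed, the restriction $Z \to T$ is proper.

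The core of the argument is to identify $Z \cap \cX_t$ with $\PBs|\omega_{\cX_t}|$ for every $t \in T$. As restriction to $\cX_t$ is right exact, $\mathcal{C}|_{\cX_t}$ is the cokernel of the restricted morphism $(\varphi^*\varphi_*\omega_{\cX/T})|_{\cX_t} \to \omega_{\cX_t}$, and (by Nakayama) its support is $Z \cap \cX_t$ set-theoretically. So it is enough to show that this restricted morphism coincides with the fiberwise adjunction $a_{\cX_t}^*(a_{\cX_t})_*\omega_{\cX_t} \to \omega_{\cX_t}$, whose cokernel has support $\PBs|\omega_{\cX_t}|$ by Theorem \ref{mainthm} (recall that $\varphi_t = a_{\cX_t}$).

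This identification is exactly where the smoothness of $\varphi$ enters, and I expect it to be the main obstacle, since it amounts to a base-change statement for $\varphi_*\omega_{\cX/T}$. Because $\varphi$ is smooth and proper, its fibers $\varphi^{-1}(p)$ are smooth projective of constant dimension $d$, and $\omega_{\cX/T}|_{\varphi^{-1}(p)} \cong \omega_{\varphi^{-1}(p)}$, since $\varphi^*\omega_{\mathcal{A}/T}$ is pulled back from the point $p$ and hence trivial on $\varphi^{-1}(p)$. Thus $p \mapsto h^0(\varphi^{-1}(p), \omega_{\cX/T}|_{\varphi^{-1}(p)}) = h^{d,0}(\varphi^{-1}(p))$ is constant over the (reduced, connected) base $\mathcal{A}$, being a Hodge number in a smooth proper family. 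Grauert's theorem then makes $\varphi_*\omega_{\cX/T}$ locally free with fiberwise base-change isomorphisms, and the same applies to $(a_{\cX_t})_*\omega_{\cX_t}$; comparing fibers over points of $\mathcal{A}_t$ shows that the base-change map $(\varphi_*\omega_{\cX/T})|_{\mathcal{A}_t} \to (a_{\cX_t})_*\omega_{\cX_t}$ is an isomorphism of locally free sheaves. Pulling it back along $a_{\cX_t}$ identifies $(\varphi^*\varphi_*\omega_{\cX/T})|_{\cX_t}$ with $a_{\cX_t}^*(a_{\cX_t})_*\omega_{\cX_t}$ compatibly with the adjunction maps (the counit $\varphi^*\varphi_* \to \mathrm{id}$ being compatible with base change precisely because that map is an isomorphism), as required.

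Granting $Z \cap \cX_t = \PBs|\omega_{\cX_t}|$, the conclusion is formal. By the upper semi-continuity of fiber dimension (EGA IV, 13.1.3), the map $z \mapsto \dim_z(Z \cap \cX_{f(z)})$ is upper semi-continuous on $Z$, so each set $\{z \in Z : \dim_z \geq d\}$ is closed; its image under the proper map $Z \to T$ is then closed and equals $\{t \in T : \dim(Z \cap \cX_t) \geq d\} = \{t : u(t) \geq d\}$. Therefore $u$ is upper semi-continuous, the convention $\dim\varnothing = -\infty$ handling automatically the points $t$ over which $Z$ is empty.
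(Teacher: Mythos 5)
Your proof is correct, and its skeleton is the same as the paper's: both arguments realize $\PBs|\omega_{\cX_t}|$, via Theorem \ref{mainthm}, as the fiber over $t$ of the single closed subset of $\cX$ cut out by the relative adjunction map $\varphi^*\varphi_*\omega_{\cX/T}\to\omega_{\cX/T}$, and both conclude by upper semi-continuity of fiber dimension together with properness of that locus over $T$. Where you genuinely diverge is in the proof of the crucial base-change isomorphism $\iota^*\varphi_*\omega_{\cX/T}\cong{\varphi_t}_*\omega_{\cX_t}$, where $\iota\colon \Alb \cX_t\hookrightarrow \Alb\cX/T$ is the inclusion of the fiber; this is the only place where smoothness of $\varphi$ enters, and both you and the paper correctly isolate it as the crux. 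The paper works in the derived category: it invokes the base-change formula of \cite{bondalorlov} to get $\mathrm{L}\iota^*\mathrm{R}\varphi_*\omega_{\cX/T}\cong\mathrm{R}{\varphi_t}_*\omega_{\cX_t}$, and then degenerates the spectral sequence $\mathrm{L}^p\iota^*(\mathrm{R}^q\varphi_*\omega_{\cX/T})\Rightarrow \mathrm{L}^{p+q}\iota^*(\mathrm{R}\varphi_*\omega_{\cX/T})$ using that all higher direct images $\mathrm{R}^q\varphi_*\omega_{\varphi}$ are locally free for the smooth morphism $\varphi$, by Hodge theory (\cite[Theorem 10.21]{esvi}). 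You instead stay within classical cohomology and base change: deformation invariance of Hodge numbers makes $p\mapsto h^0\bigl(\varphi^{-1}(p),\omega_{\varphi^{-1}(p)}\bigr)=h^{d,0}$ locally constant on the integral base $\Alb\cX/T$ (smooth over the irreducible $T$, with irreducible fibers), so Grauert's theorem applies to both $\varphi$ and $\varphi_t$, giving local freeness and fiberwise evaluation isomorphisms, and transitivity of base-change maps then shows the base-change morphism is an isomorphism of locally free sheaves. Both routes ultimately rest on Hodge theory for the smooth family $\varphi$; yours is more elementary in that it avoids derived functors and spectral sequences and only needs control of $h^0$, while the paper's argument yields base change for all the $\mathrm{R}^q\varphi_*\omega_{\cX/T}$ at once. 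The remaining steps of your write-up (right-exactness of restriction plus Nakayama to identify supports, compatibility of the counit with base change once the base-change map is invertible, Chevalley semi-continuity pushed down by properness) are all correct and coincide with the paper's use of them.
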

\begin{proof}
The Proposition can be proved quite similarly to \cite[Proposition 2.5]{baetal}. We give a sketch of it. Let us consider the exact sequence
\begin{equation}\label{exactPB}
\varphi^* \varphi_* \omega_f \otimes \omega_f^{\vee} \rightarrow \OO_{\cX} \rightarrow \OO_{\mathcal{PB}} \rightarrow 0,
\end{equation}
where $\omega_f := \det \Omega_{\cX / T}$ is the relative canonical line bundle of the smooth morphism $f \colon \cX \rightarrow T$, and $\mathcal{PB} \subseteq \cX$ is the closed subscheme  defined by the  ideal sheaf $\mathrm{Im}\bigl[\varphi^* \varphi_* \omega_f \otimes \omega_f^{\vee} \rightarrow \OO_{\cX}\bigr]$. Given any $t \in T$,  we can restrict \eqref{exactPB} to the fiber  $\cX_t$ of $f$ over $t$, obtaining the exact sequence
\begin{equation}\label{exactPB2}
a_{\cX_t}^* {a_{\cX_t}}_* \omega_{\cX_t} \otimes \omega_{\cX_t}^{\vee} \rightarrow \OO_{\cX_t} \rightarrow \OO_{\mathcal{PB}_t} \rightarrow 0.
\end{equation}
Indeed,  from the cartesian diagram
\[
\xymatrix{
\cX_t\, \ar@{^{(}->}[r] \ar[d]^-{\varphi_t} & \cX \ar[d]^-{\varphi} \\
(\Alb \cX / T)_t\, \ar@{^{(}->}[r]^-{\iota} &\Alb \cX / T
}
\]
we get that 
\begin{equation*}
(\varphi^* \varphi_* \omega_f )|_{\cX_t} = \varphi_t^* \iota^*\varphi_*\omega_f = \varphi_t^*   {\varphi_t}_*  \omega_{\cX_t} = a_{\cX_t}^* {a_{\cX_t}}_* \omega_{\cX_t},
\end{equation*}
where the first isomorphism follows by the commutativity of the diagram, and the last one by the definition of the relative Albanese morphism for $f$. The central isomorphism comes from 
the  base change of the smooth morphism $\varphi$ (see \cite[Lemma 1.3]{bondalorlov}, or \cite[p.\ 276]{poli}). More precisely, 
from the formula in \emph{op.cit.} we have
\[
\mathrm{L}\iota^* \mathrm{R}\varphi_*\omega_f = \mathrm{R}{\varphi_t}_* \omega_{\cX_t},
\]
where $\mathrm{L}$ and $\mathrm{R}$ denote the left and right derived functors, respectively.
By taking cohomology in degree $0$, we get $\mathrm{L}^0\iota^* (\mathrm{R}\varphi_*\omega_f) = {\varphi_t}_* \omega_{\cX_t}$. So we need to prove that
\begin{equation}\label{degss}
\mathrm{L}^0\iota^* (\mathrm{R}\varphi_*\omega_f) = \iota^* \varphi_*\omega_f.
\end{equation}
Note that, by definition and  the projection formula,
$\mathrm{R}\varphi_*\omega_f = \mathrm{R}\varphi_*(\omega_{\varphi} \otimes \varphi^* \omega_{\eta}) = \mathrm{R}\varphi_* \omega_{\varphi} \otimes  \omega_{\eta}$ 
(notations as in the diagram \eqref{notation0}).
Since $\varphi$ is a smooth morphism,  the higher direct images 
$\mathrm{R}^q\varphi_* \omega_{\varphi}$ 
are  locally free sheaves for all $q$  by Hodge theory (see, e.g., \cite[Theorem 10.21]{esvi}). Hence,  $\mathrm{R}^q\varphi_*\omega_f$ are locally free too, and $\mathrm{L}^p\iota^* (\mathrm{R}^q\varphi_*\omega_f) = 0$ for $p > 0$.
So the spectral sequence (see \cite[(3.10), p.\ 81]{huybrechts})
\[
E^{p, q}_2 = \mathrm{L}^p\iota^* (\mathrm{R}^q\varphi_*\omega_f) \Rightarrow \mathrm{L}^{p+q}\iota^* (\mathrm{R}\varphi_*\omega_f)
\]
degenerates, and we get \eqref{degss}.

Therefore, from \eqref{exactPB2} and Theorem \ref{mainthm}, we have that
 $\mathcal{PB}_t = \PBs|\omega_{\cX_t}|$ for any $t \in T$, and, since the fiber dimension of the proper morphism $\mathcal{PB} \rightarrow T$ is upper semi-continuous, this concludes the proof. 
\end{proof}

\begin{proof}[Proof of Theorem \ref{family}]
At this point,  the proof of Theorem \ref{family} is exactly the same of \cite[Theorem B]{baetal}, where one uses Proposition \ref{upper} instead of \cite[Proposition 2.5]{baetal} and Theorem \ref{Linthm} instead of \cite[Theorem 1.1]{baetal}.
\end{proof}

\providecommand{\bysame}{\leavevmode\hbox
to3em{\hrulefill}\thinspace}


\begin{thebibliography}{EMS}



\bibitem{barja} M. A. Barja, M. Lahoz, J. C. Naranjo and G. Pareschi, 
{On the bicanonical map of irregular varieties},
J. Algebraic Geom. \textbf{21} (2012), no. 3, 445--471.



\bibitem{baetal} F. Bastianelli, P. Belmans, S. Okawa and A.T. Ricolfi, {Indecomposability of derived categories in families}, preprint arXiv:2007.00994v1 (2020).


\bibitem{begamu} P. Belmans, S. Galkin and S. Mukhopadhyay, {Semiorthogonal decompositions for moduli of sheaves on curves}, Oberwolfach Reports 24/2018, Oberwolfach Workshop 1822, Interactions between Algebraic Geometry and Noncommutative Algebra,  1473--1476.

\bibitem{bekr} P. Belmans and  A. Krug, {Derived categories of (nested) Hilbert schemes}, Michigan Math. J. \textbf{74} (2024), no. 1, 167--187.



\bibitem{bigole} I. Biswas, T. G\'omez and K.-S. Lee, {Semi-orthogonal decomposition of symmetric products of curves and canonical system},
Rev. Mat. Iberoam. \textbf{37} (2021), no. 5, 1885--1896.

\bibitem{br} T. Bridgeland, {Equivalences of triangulated categories and Fourier-Mukai transforms}, Bull. London Math. Soc. \textbf{31} (1999), no. 1, 25--34.

\bibitem{bondalorlov} A. Bondal and D. Orlov,
 {Semiorthogonal decomposition for algebraic varieties} arXiv:alg-geom/9506012 (1995).


\bibitem{campana} F. Campana, 
{R\'eduction d'Alban\`ese d'un morphisme propre et faiblement k\"ahl\'erien, I},  Compositio Math. \textbf{54} (1985), no. 3, 373--398.

\bibitem{cp} F. Caucci and G. Pareschi, {Derived invariants arising from the Albanese map}, Algebr. Geom. \textbf{6} (2019), no. 6, 730--746.




\bibitem{clp} F. Caucci, L. Lombardi and G. Pareschi, {Derived invariance of the Albanese relative canonical ring}, Adv. Math. \textbf{419} (2023), Paper No. 108965, 47 pp.


\bibitem{cj} J.A. Chen and Z. Jiang, {Positivity in varieties of maximal Albanese dimension}, 
J. Reine Angew. Math. \textbf{736} (2018), 225--253.


\bibitem{esvi} H. Esnault and E. Viehweg, 
{\em Lectures on vanishing theorems},
DMV Seminar, \textbf{20}, Birkh\"auser Verlag, Basel, 1992.

\bibitem{fo} J. Fogarty, {Algebraic families on an algebraic surface}, Amer. J. Math. \textbf{90} (1968), 511--521.

\bibitem{fo2} J. Fogarty, 
{Algebraic families on an algebraic surface, II, The Picard scheme of the punctual Hilbert scheme},
Amer. J. Math. \textbf{95} (1973), 660--687.




\bibitem{fu} A. Fujiki, {Relative algebraic reduction and relative Albanese map for a fiber space in "$\mathcal{C}"$},
Publ. Res. Inst. Math. Sci. \textbf{19} (1983), no. 1, 207--236.



\bibitem{gr2}  A. Grothendieck,  {Fondements de la geometric algebrique (Extraits du Seminaire
Bourbaki 1957-1962)}, Paris 1962.

\bibitem{ha} Ch. Hacon, {A derived category approach to generic vanishing}, J. Reine Angew. Math. \textbf{575} (2004), 173--187.




\bibitem{hui1} M. E. Huibregtse, 
{The Albanese mapping for a punctual Hilbert scheme: I. Irreducibility of the fibers},
Trans. Amer. Math. Soc. \textbf{251} (1979), 267--285.

\bibitem{hui2} M. E. Huibregtse, 
{The Albanese mapping for a punctual Hilbert scheme: II. Symmetrized differentials and singularities},
Trans. Amer. Math. Soc. \textbf{274} (1982), no. 1, 109--140.

\bibitem{huybrechts} D. Huybrechts, {\em Fourier-Mukai transforms in algebraic geometry}, Clarendon Press - Oxford, 2006.





\bibitem{eventual} Z. Jiang, {
On Severi type inequalities},
Math. Ann. \textbf{379} (2021), no. 1-2, 133--158.

\bibitem{jlt}  Z. Jiang, M.  Lahoz and S. Tirabassi, 
{Characterization of products of theta divisors}, Compos. Math. \textbf{150} (2014), no. 8, 1384--1412.


\bibitem{jile} Q. Jiang and  N. C. Leung,
{Derived category of projectivizations and flops},
 Adv. Math. \textbf{396} (2022), Paper No. 108169, 44 pp.
 
\bibitem{kaabelian} Y. Kawamata, 
{Characterization of abelian varieties},
Compositio Math. \textbf{43} (1981), no. 2, 253--276.

\bibitem{ka} Y. Kawamata, {D-equivalence and K-equivalence}, J. Diff. Geom. \textbf{61} (2002), 147--171.

\bibitem{kasurvey} Y. Kawamata, {Birational geometry and derived categories}, in {\em Surveys in differential geometry 2017. Celebrating the 50th anniversary of the Journal of Differential Geometry}, Surv. Differ. Geom., \textbf{22} (Int. Press, Somerville, MA, 2018), 291--317.



\bibitem{kaok} K. Kawatani and S. Okawa, {Nonexistence of semiorthogonal decompositions and sections of the canonical bundle}, preprint arXiv:1508.00682v2 (2018).



\bibitem{kusurvey}
A. Kuznetsov,
{Semiorthogonal decompositions in algebraic geometry}, in {\em Proceedings of the International Congress of Mathematicians-Seoul 2014}, Vol. II, 635--660, Kyung Moon Sa, Seoul, 2014.




\bibitem{li} X. Lin, {On nonexistence of semi-orthogonal decompositions in algebraic geometry}, preprint arXiv:2107.09564v3 (2021).
 






\bibitem{loposc} L. Lombardi, M. Popa and Ch. Schnell, {Pushforwards of pluricanonical bundles under morphisms to abelian varieties}, 
J. Eur. Math. Soc.  \textbf{22} (2020), no. 8, 2511--2536.


\bibitem{mendes} M. Mendes Lopes, R. Pardini and G.P. Pirola, {Brill-Noether loci for divisors on irregular varieties}, J. Eur. Math. Soc. \textbf{16} (2014), no. 10, 2033--2057.

\bibitem{mu} D. Mumford, {\em Abelian varieties},
 Tata Institute of Fundamental Research Studies in Mathematics \textbf{5}, Hindustan Book Agency, New Delhi, 2008 (With appendices by C. P. Ramanujam and Yuri Manin. Corrected reprint of the second (1974) edition).




\bibitem{ok} S. Okawa, {Semi-orthogonal decomposability of the derived category of a curve}, Adv. Math. \textbf{228} (2011), no. 5, 2869--2873.



\bibitem{msri} G. Pareschi, {Basic results on irregular varieties via Fourier-Mukai methods}, in {\em Current Developments in Algebraic Geometry}, Math. Sci. Res. Inst. Publ., vol. 59 (Cambridge Univ. Press, Cambridge, 2012), 379--403.


\bibitem{PP1} G. Pareschi and M. Popa, {Regularity on abelian varieties I}, J. Amer. Math. Soc. \textbf{16} (2003), no. 2, 285--302.


\bibitem{paposurvey} G. Pareschi and M. Popa, {M-regularity and the Fourier-Mukai transform},
Pure Appl. Math. Q. \textbf{4} (2008), no. 3, Special Issue: In honor of Fedor Bogomolov. Part 2, 587--611.


\bibitem{PP3} G. Pareschi  and M. Popa., {Regularity on abelian varieties III: relationship with generic  vanishing and  applications}, in
{\em Grassmannians, moduli spaces and vector bundles}, 141--167,
Clay Math. Proc. \textbf{14}, Amer. Math. Soc., Providence, RI, 2011.


\bibitem{paposc} G. Pareschi, M. Popa and C. Schnell, {Hodge modules on complex tori and generic vanishing for
compact K\"ahler manifolds}, Geom. Topol. \textbf{21} (2017), no. 4, 2419--2460.

\bibitem{pi} D. Pirozhkov, {Stably semiorthogonally indecomposable varieties}, 
\'Epijournal G\'eom. Alg\'ebrique \textbf{7} (2023), Art. 11, 15 pp.
 


\bibitem{poli} A. Polishchuk, {\em Abelian varieties, theta functions and the Fourier transform}, Cambridge Univ. Press, 2002.







\bibitem{sch} R. L. E. Schwarzenberger, {Jacobians and symmetric products}, Illinois J. Math. \textbf{7} (1963), 257--268.


\bibitem{toda} Y. Toda, 
{Semiorthogonal decompositions of stable pair moduli spaces via d-critical flips},
J. Eur. Math. Soc.  \textbf{23} (2021), no. 5, 1675--1725.


\bibitem{vi} M. B. Villadsen, {Chen-Jiang decompositions for projective varieties, without Hodge modules}, 
Math. Z. \textbf{300} (2022), no. 3, 2099--2116.


\bibitem{zu} S. Zube,
{Exceptional vector bundles on Enriques surfaces},
 Math. Notes \textbf{61} (1997), no. 5-6, 693--699.


\end{thebibliography}
\end{document}